\newcommand{\R}{\mathbb R}
\def\R{\mathbb R}
\newcommand{\be}{\begin{equation}}
\newcommand{\ee}{\end{equation}}
\def\lg{\langle}
\def\rg{\rangle}
\def\cb{B}
\def\ds{\displaystyle}
\newtheorem{Theorem}{Theorem}[section]
\newtheorem{Lemma}[Theorem]{Lemma}
\newtheorem{Remark}[Theorem]{Remark}
\newtheorem{Example}[Theorem]{Example}
\begin{document}

\title{Regularity Results for Eikonal-Type Equations \\with
Nonsmooth Coefficients}
\author{Piermarco Cannarsa\thanks{Dipartimento di Matematica, Universit\`a di Roma ``Tor Vergata'',
Via della Ricerca Scientifica 1, 00133 Roma (Italy),
e-mail: cannarsa@mat.uniroma2.it}\quad\&\quad Pierre Cardaliaguet \thanks{Universit\'e de Brest,
UMR 6205, 6 Av. Le Gorgeu,
BP 809, 29285 Brest (France); e-mail:
Pierre.Cardaliaguet@univ-brest.fr}}

\maketitle

\begin{abstract} 
Solutions of the Hamilton-Jacobi equation $H(x,-Du(x))=1$, with $H(\cdot,p)$ H\"{o}lder continuous and $H(x,\cdot)$ convex and positively homogeneous of degree $1$, are shown to be locally semiconcave with a power-like modulus. An essential step of the proof is the ${\mathcal C}^{1,\alpha}$-regularity of the extremal trajectories associated with the multifunction generated by $D_pH$.
\end{abstract} 

\medskip
\noindent
{\bf \underline{Key words}:} viscosity solutions, semiconcave functions, differential inclusions, extremal trajectories

\medskip
\noindent
{\bf \underline{MSC Subject classifications}:}  49L25 34A60 26B25 49N60

\section{Introduction}
The importance of semiconcavity for the study of Hamilton-Jacobi equations and optimal control problems is by now  widely acknowledged. Indeed, such a qualitative property ensures the upper semicontinuity and quasi-monotonicity of the superdifferential, provides upper bounds for the set where differentiability fails providing, at the same time, criteria for the propagation of singularities, and leads to stronger optimality conditions than the ones holding for a continuous (or Lipschitz continuous) function, see, for instance, \cite{casi04} and the references therein. 

Typically, a real-valued function $u$ is semiconcave on the convex set $D\subset\R^N$ if there exists a modulus  (i.e., nondecreasing upper semicontinuous function, vanishing at $0$) $\omega :[0,\infty)\to [0,\infty)$  such that
$$
u(\lambda x+(1-\lambda)y) \geq \lambda u(x)+(1-\lambda)u(y) - C\lambda(1-\lambda) |x-y|\omega(|x-y|)
$$
for all $x,y\in D$ and $\lambda\in[0,1]$.

Semiconcavity results with a linear modulus hold for viscosity solutions of Hamilton-Jacobi equations with convex Hamiltonians which are sufficiently smooth with respect to the space variables, as well as value functions of optimal control problems with smooth dynamics and running cost (see, e.g., \cite{li82}, \cite{is84}, \cite{cafr91}; see also \cite{casi04}). Known generalizations allow for Lipschitz continuous dependance with respect to space, provided the Hamiltonian is strictly convex and superlinear in the gradient variables (see \cite{caso89}, \cite{si95}).

In this paper we shall study the Dirichlet problem
\be\label{HJ:intro}
\left\{\begin{array}{ll}
H(x,-Du(x))=1 & {\rm in }\; \Omega\\
u(x)=0 & {\rm on}\; \partial \Omega
\end{array}\right.
\ee
where $\Omega$ is an open subset of $\R^N$, $H(x,\cdot)$ is  convex and positively homogeneous of degree 1, and 
$H(\cdot,p)$ is just H\"{o}lder continuous. Consequently, \eqref{HJ:intro} fits none of the aforementioned settings.
Nevertheless, our main result---Theorem~\ref{theo:main} below---guarantees that the solution $u$ of \eqref{HJ:intro} 
is locally semiconcave in $\Omega$ with the power-like modulus $\omega(t)=Ct^\theta$, for some  $\theta>0$ depending on $H$.

The method of proof relies on the representation of $u(x)$ as the minimum time needed to reach $\partial\Omega$ along a trajectory of the differential inclusion
\begin{equation}\label{DI:intro}
\begin{cases}
x'(t)\in F(x(t))
& t\ge 0\quad \text{a.e.}
\\
x(0)=x\,,
\end{cases}
\end{equation}
where
$$
F(x)={\rm co}\,\{D_pH(x,p)~:~ p\in \R^N\backslash \{0\}\}\qquad\forall x\in\R^N\,.
$$
An essential step of the analysis is the ${\mathcal C}^{1,\alpha}$-regularity of the extremal trajectories of \eqref{DI:intro}, see 
Theorem~\ref{ReguExtremal}.  For  time-dependent and isotropic Hamiltonians ($H=a(t,x)|p|$), such a regularity property---interesting in its own right---has already been observed  in \cite{subu07} for $N=2$, and \cite{calemo} for general $N$.  However, the unexpected connection between Theorem~\ref{ReguExtremal} and the  semiconcavity of the solution of \eqref{HJ:intro} is, to our best knowledge, entirely new. 

The main technical tools we borrow from convex analysis  are recalled in detail in section~\ref{se:preliminaries}, which makes  this paper essentially self-contained.

\section{Notation and assumptions}\label{se:notation}
Let $N$ be a positive integer. Denote by $\langle\cdot,\cdot\rangle$ and $|\cdot|$ the Euclidean scalar product and norm in $\R^N$, respectively, and set 
\begin{equation*}
\cb = \{x\in\R^N~:~|x|\le 1\}\,.
\end{equation*}
More generally, for all $x\in\R^N$ and $\rho>0$, $B(x,\rho)$ stands for the closed ball of radius $\rho$ centered at $x$, that is,
$B(x,\rho)=x+\rho B$.

Let $H:\R^N\times \R^N\to \R$ be a continuous function satisfying the following assumptions for some positive constants $C_0,r, R$, with $r<R$, and $\alpha\in (0,1/2)$.

\smallskip
\noindent
{\bf Standing Assumptions (SA):}
\begin{itemize}
\item
For all $p\in\R^N$, the function $x\mapsto H(x,p)$ is $2\alpha$-H\"{o}lder continuous, and 
\be\label{HHold}
\left| H(x,p)-H(y,p)\right|\leq C_0|x-y|^{2\alpha}|p|\qquad \forall x,y\in \R^N\,.
\ee
\item For all $x\in\R^N$, the function $p\mapsto H(x,p)$ is convex on $\R^N$, positively homogeneous of degree one, and
has linear growth, i.e.,
\be\label{croissanceH}
r|p|\ \leq\  H(x,p)\ \leq R|p|\ \qquad \forall p\in \R^N\;.
\ee
\item For all $x\in\R^N$, the function $p\mapsto H(x,p)$ is continuously differentiable on $\R^N\backslash \{0\}$, and, for all $p,q\in \R^N\backslash \{0\}$,
\be\label{ReguH}
\begin{array}{l}
\ds{ -\frac{1}{2r}\left|D_pH(x,q)-D_pH(x,p)\right|^2 } \\
\qquad \qquad \leq \; \ds{ \lg D_pH(x,q)-D_pH(x,p),\frac{p}{|p|}\rg  }\\
\qquad \qquad \qquad \qquad \ds{ \leq -\frac{1}{2R}\left|D_pH(x,q)-D_pH(x,p)\right|^2 }\,,
\end{array}
\ee
where $D_pH(x,p)$ denotes the gradient of $H$ in the $p$-variables at $(x,p)$.
\end{itemize}

Hereafter, by a universal constant---briefly, a constant---we mean a {\em positive} real number that only depends on the parameters $N,\alpha,r,R,$ and $C_0$ introduced above. 
Generic constants appearing in computations will be denoted by $C$. A subscript ($C_1, C_2,\dots$) will be added when necessary for future reference.
\begin{Example}\label{ex:Hamiltonian}\rm
 Let $H(x,p)=|A(x)p|$, where $A:\R^N\to \R^{N\times N}$ is $\alpha $-H\"{o}lder continuous
on $\R^N$, 
$A(x)$ is invertible for all $x\in\R^N$, and
$$
|A(x)|\leq C \quad {\rm and }\quad |A(x)^{-1}|\leq C \qquad \forall x\in \R^N
$$
for some constant $C$. Then $H$ satisfies (SA) for a suitable choice of constants.
\end{Example}

\section{Preliminary results}\label{se:preliminaries}
Let $H:\R^N\times \R^N\to \R$ be a continuous function satisfying our Standing Assumptions with fixed constants $\alpha,r,R,$ and $C_0$. 

For all $p\neq 0$, set $f_p(x)=D_pH(x,p)$ and define 
\begin{equation}\label{eq:F}
F(x)={\rm co}\,\{f_p(x)~:~ p\in \R^N\backslash \{0\}\}\qquad\forall x\in\R^N\,,
\end{equation}
where `co' stands for convex hull. Note that, for all $(x,p)\in \R^N\times (\R^N\backslash \{0\})$,
\begin{equation}\label{eq:H}
H(x,p)=\max_{v\in F(x)}\lg v,p\rg \qquad {\rm and }\qquad f_p(x)={\rm argmax}_{v\in F(x)} \lg v,p\rg\;.
\end{equation}

We begin by recovering some properties of $F(\cdot)$  that follow directly from (SA).
\begin{Lemma} The set-valued map $F$ is $2\alpha$-H\"{o}lder continuous, i.e.,
\be\label{HypHol}
F(x)\subset F(y)+C_0|x-y|^{2\alpha} B\qquad\forall x,y\in\R^N\,,
\ee
and satisfies, for all $x\in\R^N$, the {\em curvature} estimates 
\be\label{CondBoule}
B\Big(f_p(x)-r\frac{p}{|p|},r\Big)\subset F(x) \quad {\rm and}\quad F(x) \subset B\Big(f_p(x)-R\frac{p}{|p|},R\Big)\qquad\forall p\neq0\,,
\ee
as well as the {\em controllability} condition 
\be\label{control}
B(0,r)\ \subset \ F(x) \ \subset \ B(0,R)\,.
\ee\label{controlH}
\end{Lemma}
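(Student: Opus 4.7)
I plan to read all three properties off the support-function identity $H(x,\cdot)=\sigma_{F(x)}$ recorded in \eqref{eq:H}, which converts the analytic hypotheses on $H$ into geometric statements about $F$.

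The controllability bound \eqref{control} is immediate: the growth condition \eqref{croissanceH} reads $\sigma_{B(0,r)}(p)\le\sigma_{F(x)}(p)\le\sigma_{B(0,R)}(p)$ for every $p$, and nonempty closed convex bodies are ordered by inclusion iff their support functions are ordered pointwise. The H\"older inclusion \eqref{HypHol} is equally direct: the Hausdorff distance between two nonempty compact convex sets equals the sup-norm difference of their support functions on the unit sphere, so \eqref{HHold} yields $d_H(F(x),F(y))\le C_0|x-y|^{2\alpha}$, which is the claimed inclusion.

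For the curvature estimates \eqref{CondBoule} I would treat the two inclusions separately. The outer bound $F(x)\subset B(f_p(x)-R\,p/|p|,R)$ reduces, by convexity of the target ball and the definition \eqref{eq:F}, to checking $|f_q(x)-(f_p(x)-R\,p/|p|)|^2\le R^2$ for every generator $f_q(x)$; after expanding the square, this is exactly the right-hand inequality of \eqref{ReguH}.

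The inner bound $B(f_p(x)-r\,p/|p|,r)\subset F(x)$ is the main obstacle. Via support-function duality it is equivalent to
\[
H(x,\eta)\ge\langle f_p(x),\eta\rangle+r\bigl(|\eta|-\langle p/|p|,\eta\rangle\bigr)\qquad\forall\eta\in\R^N,
\]
or, using Euler's identity $H(x,\eta)=\langle f_\eta(x),\eta\rangle$ for $\eta\neq 0$, to the strong-convexity-type estimate $\langle f_\eta(x)-f_p(x),\eta\rangle\ge r(|\eta|-\langle p/|p|,\eta\rangle)$. The naive application of the left-hand inequality of \eqref{ReguH} with fixed base $p$ only yields $|f_\eta(x)-(f_p(x)-r\,p/|p|)|\ge r$, i.e.\ each generator lies outside the corresponding open ball---necessary but far from sufficient for the inclusion. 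To close the gap I plan to exploit the fact that \eqref{ReguH} holds for \emph{every} base point, by integrating the left-hand inequality along a smooth path joining $p$ to $\eta$ on the unit sphere (for instance a great-circle arc, extended radially by homogeneity). The left-hand inequality is the finite-range analogue of an $r$-strong convexity of $H(x,\cdot)$ in directions tangent to the sphere, and in analogy with the classical equivalence between strong convexity and strong monotonicity of the gradient, the integration should convert the quadratic-remainder monotonicity into exactly the linear-in-$r$ inequality above, with the constant $r$ tracing back to the $1/(2r)$ factor in \eqref{ReguH}.
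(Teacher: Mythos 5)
Your treatment of the controllability bound, the H\"older inclusion, and the outer curvature estimate is correct and matches the paper's argument essentially verbatim: support-function comparison gives the first two, and checking each generator $f_q(x)$ against the ball (legitimate since $F(x)={\rm co}\{f_q(x)\}$ and the ball is convex) gives the outer bound after expanding the square.

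The problem is your inner bound. You dismiss the ``naive'' consequence of the left-hand side of \eqref{ReguH}---namely that $|f_q(x)-(f_p(x)-r\,p/|p|)|\ge r$ for every $q$---as ``far from sufficient,'' but it is in fact sufficient, and this is exactly how the paper proceeds (see Remark~\ref{re:inclusions}). Two observations close the gap you think exists. First, because $H(x,\cdot)$ is $C^1$ on $\R^N\setminus\{0\}$, every face of $F(x)$ is a singleton, hence $F(x)$ is strictly convex, hence \emph{every} boundary point of $F(x)$ is exposed and therefore equals $f_q(x)$ for some $q\ne 0$; so ``every generator outside the open ball'' is the same as ``every point of $\partial F(x)$ outside the open ball.'' Second, once ${\rm int}\,B(c,r)$ is disjoint from $\partial F(x)$ (with $c=f_p(x)-r\,p/|p|$), the connected open set ${\rm int}\,B(c,r)$ lies entirely in ${\rm int}\,F(x)$ or entirely outside $F(x)$; and it cannot be outside, because $f_p(x)\in\partial B(c,r)\cap F(x)$ and points such as $(1-\varepsilon)f_p(x)$ lie, for small $\varepsilon>0$, simultaneously in ${\rm int}\,F(x)$ (since $0\in{\rm int}\,F(x)$) and in ${\rm int}\,B(c,r)$ (a short computation using $\lg p/|p|,f_p(x)\rg=H(x,p)/|p|>0$). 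Hence $B(c,r)\subset F(x)$. Your proposed replacement---integrating the monotonicity inequality along a great circle to establish strong convexity of $H(x,\cdot)$ tangentially---is not carried out, and a direct attempt via the support-function inequality $\lg f_\eta-f_p,\eta/|\eta|\rg\ge r(1-\lg p/|p|,\eta/|\eta|\rg)$ in fact runs into trouble: combining the right-hand side of \eqref{ReguH} with Lemma~\ref{lem:HYP2} only produces the constant $r^2/R$ rather than $r$, and the left-hand side of \eqref{ReguH} with roles swapped gives an \emph{upper} bound on that inner product, not a lower one. So the route you sketch would at best yield a strictly weaker inner radius, whereas the simple topological argument you discarded gives the sharp constant immediately.
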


\begin{Remark}\label{re:inclusions}
{\rm  The first inclusion in  (\ref{CondBoule})---which can be interpreted as  an upper bound for the curvature of $\partial F(x)$---is equivalent to the inequality
$$
\Big|v-f_p(x)+r\frac{p}{|p|}\Big|\geq r\qquad \forall v\in \partial F(x)\,,
$$
which in turn can be recast as follows
\be\label{CondBouleIntEqDef}
- \frac1{2r} \left|v-f_p(x)\right|^2  \leq \Big\lg v-f_p(x), \frac{p}{|p|}\Big\rg \qquad \forall v\in \partial F(x)\,.
\ee
The second inclusion in (\ref{CondBoule})---a lower bound for the curvature---can be rephrased as
$$
\Big|v-f_p(x)+R\frac{p}{|p|}\Big|\leq R \qquad \forall v\in F(x)\,,
$$
which is equivalent to
\begin{equation*}
\Big\lg v-f_p(x),\frac{p}{|p|}\Big\rg \leq -\frac{1}{2R}|v-f_p(x)|^2\qquad \forall v\in F(x)\,,
\end{equation*}
or, since $F(x)$ is convex, 
\be\label{CondBouleExtEqDef}
\Big\lg v-f_p(x),\frac{p}{|p|}\Big\rg \leq -\frac{1}{2R}|v-f_p(x)|^2\qquad \forall v\in \partial F(x)\,.
\ee
}\end{Remark}
\begin{proof}
 Note that, since  $H(x,\cdot)$ is the support function of $F(x)$, inequality (\ref{HHold}) directly implies (\ref{HypHol})
while (\ref{croissanceH}) entails (\ref{control}). Let us now check that the regularity condition (\ref{ReguH}) implies (\ref{CondBoule}). 
For this we just have to note that, for any $v\in \partial F(x)$, there is some $q\neq 0$ such that $v=f_q(x)$, so that
(\ref{ReguH}) becomes
$$
-\frac{1}{r}\left|v-f_p(x)\right|^2
\leq 
\Big\lg v-f_p(x),\frac{p}{|p|}\Big\rg 
\leq -\frac{1}{R}\left|v-f_p(x)\right|^2 \qquad \forall v\in \partial F(x)\,.
$$
These two equalities are equivalent to (\ref{CondBouleIntEqDef}) and (\ref{CondBouleExtEqDef}). 
\end{proof}

Next, we derive a regularity result for $f_p(\cdot)$, which is actually a consequence of the H\"{o}lder continuity  of $F$ in (\ref{HypHol}) combined with the lower curvature bound in (\ref{ReguH}).

\begin{Lemma}\label{fpxfpy} For all $p\in \R^N\backslash\{0\}$ we have
$$
\left|f_p(x)-f_p(y)\right|\leq (C_0+\sqrt{2C_0R})\ |x-y|^\alpha \qquad \forall x,y\in \R^N\,.
$$
\end{Lemma}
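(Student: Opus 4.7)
The plan is to exploit the tension between two properties of $F$: the set-valued map varies H\"older-continuously in $x$ via~\eqref{HypHol}, whereas the maximizer $f_p(x)$ of $\lg\cdot,p\rg$ over $F(x)$ sits at a point where $\partial F(x)$ has quadratic curvature in the $p/|p|$-direction, per~\eqref{CondBouleExtEqDef}. These two facts force maximizers at nearby points to lie close to one another, with a square-root loss in the H\"older exponent.

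Concretely, by~\eqref{HypHol} there exists $w\in F(x)$ with $|w-f_p(y)|\le C_0|x-y|^{2\alpha}$. Since $w\in F(x)$, the upper-curvature bound~\eqref{CondBouleExtEqDef} yields
$$
|w-f_p(x)|^2 \;\le\; 2R\,\lg f_p(x)-w,\,p/|p|\rg\,.
$$
To estimate the right-hand side I would invoke Euler's identity for the $1$-homogeneous Hamiltonian, which gives $\lg f_p(x),p\rg=H(x,p)$ and $\lg f_p(y),p\rg=H(y,p)$, and hence
$$
\lg f_p(x)-w, p/|p|\rg \;=\; \frac{H(x,p)-H(y,p)}{|p|} + \lg f_p(y)-w, p/|p|\rg \;\le\; 2C_0|x-y|^{2\alpha}
$$
after applying~\eqref{HHold} and Cauchy-Schwarz together with the choice of $w$. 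Taking square roots and closing via a triangle inequality gives $|f_p(x)-f_p(y)|\le 2\sqrt{C_0R}\,|x-y|^\alpha+C_0|x-y|^{2\alpha}$, which for $|x-y|\le 1$ already yields H\"older continuity with exponent $\alpha$; for $|x-y|>1$ one just uses the a priori bound $|f_p|\le R$ coming from~\eqref{control}.

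The step I expect to be the main obstacle is pinning down the sharp constant $C_0+\sqrt{2C_0R}$ rather than the weaker $C_0+2\sqrt{C_0R}$ produced by the one-sided argument above. To recover the missing factor of $\sqrt{2}$, I would symmetrize: running the same argument with $x$ and $y$ interchanged yields a twin inequality for some $w'\in F(y)$ close to $f_p(x)$, and summing the two allows one to retain the squared correction terms $|w-f_p(y)|^2$ and $|w'-f_p(x)|^2$ that were discarded above. Completing the square then produces an inequality of the form $(|f_p(x)-f_p(y)|-\eta)^2\le 2R\eta$ with $\eta=C_0|x-y|^{2\alpha}$, from which the announced bound is immediate.
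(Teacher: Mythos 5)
Your proposal is correct and follows essentially the same route as the paper: pick $v_y\in F(x)$ near $f_p(y)$ and $v_x\in F(y)$ near $f_p(x)$ via the H\"older bound \eqref{HypHol}, apply the lower curvature estimate \eqref{CondBouleExtEqDef} to both, and add. Your final ``completing the square'' step, using that $|v_y-f_p(x)|$ and $|v_x-f_p(y)|$ are each at least $|f_p(x)-f_p(y)|-C_0|x-y|^{2\alpha}$, is in fact needed to obtain the stated constant $C_0+\sqrt{2C_0R}$ (the paper's intermediate displayed bound $\le 2C_0R|x-y|^{2\alpha}$ looks like it should read $4C_0R|x-y|^{2\alpha}$, after which your symmetrization argument recovers the announced constant), so your account is, if anything, slightly more careful than the paper's.
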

\begin{proof}
 Let $x,y\in \R^N$. In view of (\ref{HypHol}) there are $v_y\in F(x)$ and $v_x\in F(y)$ such that 
\begin{equation*}
|f_p(y)-v_y|\leq C_0|x-y|^{2\alpha}\quad\mbox{and}\quad |f_p(x)-v_x|\leq C_0|x-y|^{2\alpha}\,.
\end{equation*}
Then, by (\ref{CondBouleExtEqDef}), 
$$
\Big\lg v_y-f_p(x),\frac{p}{|p|}\Big\rg \leq -\frac{1}{2R} |v_y-f_p(x)|^2 \; {\rm and }\; 
\Big\lg v_x-f_p(y),\frac{p}{|p|}\Big\rg \leq -\frac{1}{2R} |v_x-f_p(y)|^2 \;.
$$
Adding up the above two inequalities yields
$$
 |v_y-f_p(x)|^2+|v_x-f_p(y)|^2\ \leq -2R\  \Big\lg v_y-f_p(x)+v_x-f_p(y),\frac{p}{|p|}\Big\rg \;  \leq\;  2C_0R|x-y|^{2\alpha}\,.
 $$
 So,
 $$
 |f_p(y)-f_p(x)|\leq  |f_p(y)-v_y|+|v_y-f_p(x)|\leq (C_0+\sqrt{2C_0R})\ |x-y|^\alpha\,,
 $$
and the proof is complete.
\end{proof}

Describing the way how $f_p(x)$ depends on $p$ is the object of our next result.
\begin{Lemma}\label{lem:HYP2} For every $x\in \R^N$ we have
$$
\frac{1}{R} |f_p(x)-f_q(x)|\leq \left|\frac{p}{|p|}-\frac{q}{|q|}\right|\leq \frac{1}{r} |f_p(x)-f_q(x)|\,,\qquad\forall p,q\in \R^N\backslash \{0\}\,.
$$
\end{Lemma}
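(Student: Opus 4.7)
My plan is to first normalize by homogeneity: since $H(x,\cdot)$ is positively homogeneous of degree $1$, the gradient $f_p(x)=D_pH(x,p)$ is positively homogeneous of degree $0$ in $p$, so $f_p(x)=f_{p/|p|}(x)$ and $f_q(x)=f_{q/|q|}(x)$. It suffices to prove both inequalities under the assumption $|p|=|q|=1$, in which case they reduce to
$$
\tfrac{1}{R}|f_p(x)-f_q(x)|\ \le\ |p-q|\ \le\ \tfrac{1}{r}|f_p(x)-f_q(x)|.
$$

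For the left inequality I plan to use the outer curvature bound in the form \eqref{CondBouleExtEqDef}: since $f_q(x)\in\partial F(x)$, applying it at $f_p(x)$ with $v=f_q(x)$ gives $\langle f_q(x)-f_p(x),p\rangle\le -\frac{1}{2R}|f_p(x)-f_q(x)|^2$, and interchanging $p$ and $q$ gives the analogous inequality with $q$. Adding the two yields
$$
\langle f_p(x)-f_q(x),\,p-q\rangle\ \ge\ \tfrac{1}{R}|f_p(x)-f_q(x)|^2,
$$
and Cauchy--Schwarz then delivers $|p-q|\ge \tfrac{1}{R}|f_p(x)-f_q(x)|$ (the case $f_p(x)=f_q(x)$ being trivial).

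For the right inequality, the outer-ball inequality is no longer the right tool; instead I exploit the inner-ball inclusion $B(f_p(x)-rp,r)\subset F(x)$ from \eqref{CondBoule} directly, through the well-chosen test point $f_p(x)-rp+rq$, which lies on the boundary of this inner ball (distance exactly $r$ from the centre since $|q|=1$) and hence in $F(x)$. Using \eqref{eq:H}, namely that $f_q(x)$ maximizes $\langle v,q\rangle$ over $v\in F(x)$, I get
$$
\langle f_p(x)-rp+rq,\,q\rangle\ \le\ \langle f_q(x),q\rangle,
$$
which simplifies (using $|q|=1$ and $2-2\langle p,q\rangle=|p-q|^2$) to $\langle f_p(x)-f_q(x),q\rangle\le -\tfrac{r}{2}|p-q|^2$. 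Swapping the roles of $p$ and $q$ gives the symmetric inequality, and adding yields $\langle f_p(x)-f_q(x),p-q\rangle\ge r|p-q|^2$. A second application of Cauchy--Schwarz concludes $|p-q|\le \tfrac{1}{r}|f_p(x)-f_q(x)|$.

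The main subtlety I anticipate is the asymmetry between the two halves of the argument: the outer curvature bound, being valid on $\partial F(x)$, directly compares two boundary points $f_p(x)$ and $f_q(x)$, whereas the inner curvature bound by itself only gives an unhelpful \emph{upper} bound on the relevant inner product when specialized to $v=f_q(x)\in\partial F(x)$; one must instead think of the inner ball as a geometric inclusion and probe $f_q(x)$ through the maximality of $\langle\cdot,q\rangle$ on $F(x)$. Once this asymmetric strategy is recognized, both estimates follow from the same two-line pattern: pair a curvature/support-function inequality with its $p\leftrightarrow q$ counterpart and finish with Cauchy--Schwarz.
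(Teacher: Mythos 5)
Your proof is correct and follows essentially the same approach as the paper's: for the left inequality you add the two outer curvature estimates from \eqref{CondBouleExtEqDef} and finish with Cauchy--Schwarz, exactly as in the paper; and for the right inequality your key move---probing $f_q(x)$ with the test point $f_p(x)-rp+rq$ inside the inner ball and invoking maximality of $\langle\cdot,q\rangle$ over $F(x)$---is precisely how the paper arrives at its inequality \eqref{eq:intermediate}. The only (minor but genuine) divergence is in how you finish the right inequality: the paper derives a second bound \eqref{eq:final} from \eqref{CondBouleIntEqDef} applied to the boundary points $f_p(x),f_q(x)$ and combines it with \eqref{eq:intermediate}, whereas you simply apply Cauchy--Schwarz to \eqref{eq:intermediate}; your route is slightly shorter and avoids invoking the inner curvature inequality a second time.
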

\begin{proof}
 Let $x\in \R^N$ and let $p,q\in \R^N\backslash \{0\}$. Let us start with the first inequality. Recalling the second condition in
(\ref{CondBoule}) in its equivalent form (\ref{CondBouleExtEqDef}), we have, since $f_q(x)\in \partial F(x)$,
$$
\Big\lg f_q(x)-f_p(x),\frac{p}{|p|}\Big\rg \leq -\frac{1}{2R}|f_q(x)-f_p(x)|^2\,.
$$
In a symmetric way we also have 
$$
\Big\lg f_p(x)-f_q(x),\frac{q}{|q|}\Big\rg \leq -\frac{1}{2R}|f_p(x)-f_q(x)|^2\,.
$$
Adding the two inequalities easily gives the first inequality.

We now prove the second inequality, which is slightly more subtle.  
 Recalling the first inclusion in (\ref{CondBoule}) and the definition of $f_p(x)$, we conclude that 
$$
\Big\lg q, f_p(x)-r\frac{p}{|p|}+rb\Big\rg \leq \lg q,f_q(x)\rg \,, \qquad\forall b\in B\,.
$$
Hence,
$$
-r \Big\lg \frac{q}{|q|}, \frac{p}{|p|}\Big\rg +r \leq \Big\lg f_q(x)-f_p(x),\frac{q}{|q|} \Big\rg \,.
$$
Thus, exchanging $p$ and $q$,
$$
-r \Big\lg \frac{p}{|p|}, \frac{q}{|q|} \Big\rg +r  \leq \Big\lg f_p(x)-f_q(x),\frac{p}{|p|} \Big\rg \,.
$$
Adding the above inequalities together leads to
\begin{equation}\label{eq:intermediate}
r \Big |\frac{p}{|p|}-\frac{q}{|q|} \Big |^2 
=
2r \Big(1-\Big\lg \frac{p}{|p|},\frac{q}{|q|} \Big\rg \Big) \leq \Big\lg f_p(x)-f_q(x), \frac{p}{|p|}- \frac{q}{|q|} \Big\rg\,.
\end{equation}
Since $f_p(x)$ and $f_q(x)$ are boundary points, 
\eqref{CondBouleIntEqDef} yields
$$
\Big\lg f_p(x)-f_q(x), \frac{p}{|p|}\Big\rg \leq \frac1{2r} \left|f_q(x)-f_p(x)\right|^2\,.
$$
and
$$
\Big\lg f_q(x)-f_p(x), \frac{q}{|q|}\Big\rg \leq \frac1{2r} \left|f_q(x)-f_p(x)\right|^2\,.
$$
Therefore,
\begin{equation}\label{eq:final}
\Big\lg f_p(x)-f_q(x), \frac{p}{|p|}-\frac{q}{|q|}\Big\rg \leq \frac{1}{r} |f_p(x)-f_q(x)|^2\;.
\end{equation}
The conclusion follows from \eqref{eq:intermediate} and \eqref{eq:final}.
\end{proof}

Let us now consider the {\em polar} of $H$, namely  the function $H^0$ defined by
$$
H^0(x,q):=\max\big\{\lg p,q\rg ~:~ H(x,p)\leq 1\big\}\qquad\forall (x,q)\in  \R^N\times \R^N\,.
$$
It is well-known that, for all $(x,q)\in \R^N\times \R^N$,
\be\label{H0leq1}
H^0(x,q)\leq 1 \qquad\Longleftrightarrow\qquad q\in F(x)\,,
\ee
and
\begin{equation}\label{eq:H^0}
H^0\big(x,D_pH(x,p)\big)=H^0\big(x,f_p(x)\big)=1 \qquad \forall (x,p)\in \R^N\times (\R^N\backslash \{0\})\,.
\end{equation}
The duality between $H$ and $H^0$ brings similar qualitative properties for these two functions. For instance, on account of (\ref{control}), we have
\be\label{control2}
\frac{|q|}{R}\leq H^0(x,q)\leq \frac{|q|}{r} \qquad \forall (x,q)\in \R^N\times \R^N\;.
\ee
Moreover, $H^0$ is also H\"{o}lder continuous with respect to $x$, with the same exponent as $H$.
\begin{Lemma}\label{le:H0}
 For all $q \in \R^N$,
 \be\label{ReguH0}
|H^0(x,q)-H^0(y,q)|\leq \frac{C_0}{r^2}|q||x-y|^{2\alpha} \qquad \forall x,y \in \R^N\;.
\ee
\end{Lemma}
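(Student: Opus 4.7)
The plan is to exploit the fact that, for fixed $x$, the function $H^0(x,\cdot)$ is the gauge (Minkowski functional) of the convex body $F(x)$. This is encoded in the equivalence \eqref{H0leq1} together with the positive $1$-homogeneity of $H^0$ in its second argument. Since $F$ itself is $2\alpha$-H\"older by \eqref{HypHol} and uniformly contains $B(0,r)$ by \eqref{control}, the modulus of $F$ should transfer to a modulus of $H^0$, weighted by the factor $1/r$ that reflects the lower controllability bound.

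Concretely, I would set $\lambda:=H^0(x,q)$ and reduce to the case $q\neq 0$, so that $\lambda\geq |q|/R>0$ by \eqref{control2}. Homogeneity then gives $H^0(x,q/\lambda)=1$, hence $q/\lambda\in F(x)$ by \eqref{H0leq1}. Applying \eqref{HypHol} produces some $v\in F(y)$ with $|q/\lambda-v|\leq C_0|x-y|^{2\alpha}$, and \eqref{H0leq1} yields $H^0(y,v)\leq 1$.

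To close the argument I would use two elementary properties of $H^0(y,\cdot)$: it is subadditive (being the support function of $\{p:H(y,p)\leq 1\}$, hence convex and positively $1$-homogeneous in $q$), and it satisfies $H^0(y,w)\leq |w|/r$ by \eqref{control2}. Together these give
\[
H^0(y,q/\lambda)\;\leq\; H^0(y,v)+H^0(y,\,q/\lambda-v)\;\leq\; 1+\frac{C_0}{r}|x-y|^{2\alpha}.
\]
Multiplying by $\lambda$ and using $\lambda\leq |q|/r$ yields $H^0(y,q)-H^0(x,q)\leq \frac{C_0}{r^2}|q||x-y|^{2\alpha}$, and exchanging the roles of $x$ and $y$ supplies the matching lower bound. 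No step looks like a real obstacle; the only point worth flagging is the subadditivity of $H^0(y,\cdot)$, which is not stated earlier but is immediate from its definition as a maximum of linear forms.
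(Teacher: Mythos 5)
Your proof is correct, and it takes a genuinely different route from the paper's. The paper works on the ``primal'' side: it picks $p\in F^0(x)=\{p':H(x,p')\le 1\}$ achieving the maximum in the definition of $H^0(x,q)$, bounds $|p|\le 1/r$ by \eqref{croissanceH}, uses the H\"older bound \eqref{HHold} on $H$ directly to show $H(y,p)\le 1+\frac{C_0}{r}|x-y|^{2\alpha}$, rescales $p$ to make it admissible at $y$, and then massages the resulting lower bound on $H^0(y,q)$ into the desired estimate using \eqref{control2}. You instead work on the ``dual'' side: you view $H^0(x,\cdot)$ as the gauge of $F(x)$, normalize $q$ so that $q/\lambda\in\partial F(x)$, invoke the H\"older continuity \eqref{HypHol} of the set-valued map $F$ itself to land near $F(y)$, and finish with subadditivity of $H^0(y,\cdot)$ plus the upper bound in \eqref{control2}. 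The two arguments are mirror images under polarity and give the same constant $C_0/r^2$. Your version has the virtue of reusing \eqref{HypHol}, already established in the preceding lemma, and of making the subadditivity step explicit rather than hiding it in a multiplicative rescaling; the paper's version avoids any appeal to the set-valued map $F$ and stays entirely at the level of $H$ and $H^0$. Both are clean; your only implicit input beyond what is stated is the subadditivity of $H^0(y,\cdot)$, which, as you note, is immediate since $H^0(y,\cdot)$ is a support function.
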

\begin{proof}
 Let $x,y,q \in \R^N$. Take $p\in\R^N$, with $H(x,p)\leq 1$, such that $H^0(x,q)=\lg p,q\rg$. Then, by (\ref{croissanceH}), $|p|\leq 1/r$. Also, by (\ref{HHold}),
 \begin{equation*}
H(y,p)\leq 1+\frac{C_0}{r}|x-y|^{2\alpha}\,.
\end{equation*}
So,
$$
H^0(y,q)\geq \Big\lg\frac{p}{1+\frac{C_0}{r}|x-y|^{2\alpha}}, q\Big\rg =\frac{H^0(x,q)}{1+\frac{C_0}{r}|x-y|^{2\alpha}} \,.
$$
On the other hand, in view of \eqref{control2},
\begin{eqnarray*}
H^0(x,q)&=& \frac{H^0(x,q)}{1+\frac{C_0}{r}|x-y|^{2\alpha}} + 
\frac{\frac{C_0}{r}|x-y|^{2\alpha}}{1+\frac{C_0}{r}|x-y|^{2\alpha}}H^0(x,q)
\\
&\leq& \frac{H^0(x,q)}{1+\frac{C_0}{r}|x-y|^{2\alpha}} + \frac{C_0}{r^2}|q||x-y|^{2\alpha}\,.
\end{eqnarray*}
Thus,
\begin{equation*}
H^0(y,q)\geq H^0(x,q)- \frac{C_0}{r^2}|q||x-y|^{2\alpha}\,.
\end{equation*}
Hence, we obtain the conclusion exchanging the roles of $x$ and $y$. 
\end{proof}

We now turn to the analysis of the level set  
\begin{equation*}
F^0(x)= \left\{ p\in \R^N~:~H(x,p)\leq 1\right\}\qquad x\in\R^N\,.
\end{equation*}
\begin{Lemma}\label{lem:p/|p|vsp} Let $x\in\R^N$. Then, for every $p,p'\in \R^N$ with $H(x,p)=H(x,p')=1$,  
\begin{equation}\label{eq:le36}
|p-p'| \leq C\Big|\frac{p'}{|p'|}-\frac{p}{|p|}\Big|
\end{equation}
for some constant $C$.
\end{Lemma}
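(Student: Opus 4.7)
The plan is to exploit the positive homogeneity of degree one to reduce everything to the directions $p/|p|$ and $p'/|p'|$, and then control the norms $|p|, |p'|$ by the value of $H$ on the unit sphere.

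First I would observe that since $H(x,\cdot)$ is convex and satisfies the linear growth bound $H(x,p) \leq R|p|$, it is globally $R$-Lipschitz in the $p$-variable (convex functions with linear upper growth from zero are Lipschitz with the same constant). Second, setting $u = p/|p|$ and $u' = p'/|p'|$, the positive homogeneity gives $H(x,p) = |p|\,H(x,u)$, so the assumption $H(x,p) = H(x,p') = 1$ rewrites as
\begin{equation*}
|p| = \frac{1}{H(x,u)}, \qquad |p'| = \frac{1}{H(x,u')}.
\end{equation*}

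Next I would bound $\bigl||p|-|p'|\bigr|$. Using \eqref{croissanceH} we have $H(x,u), H(x,u') \geq r$, and by the Lipschitz property noted above, $|H(x,u)-H(x,u')| \leq R|u-u'|$. Therefore
\begin{equation*}
\bigl| |p| - |p'| \bigr| = \frac{|H(x,u')-H(x,u)|}{H(x,u)H(x,u')} \leq \frac{R}{r^2}\,|u-u'|.
\end{equation*}

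Finally I would close by the triangle inequality: writing $p - p' = |p|(u-u') + (|p|-|p'|)u'$ and using the bound $|p|\leq 1/r$ from \eqref{croissanceH},
\begin{equation*}
|p-p'| \leq |p|\,|u-u'| + \bigl| |p|-|p'| \bigr| \leq \Big(\frac{1}{r} + \frac{R}{r^2}\Big)\,\Big|\frac{p}{|p|} - \frac{p'}{|p'|}\Big|,
\end{equation*}
which is the desired inequality with $C = 1/r + R/r^2$. There is no real obstacle here: the result is a direct consequence of the normalization $H(x,p)=1$ combined with the two-sided linear bound on $H$, and does not require any of the finer curvature information from \eqref{CondBoule} or Lemma~\ref{lem:HYP2}.
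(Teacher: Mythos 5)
Your proof is correct and essentially identical to the paper's: both start from the observation that $H(x,\cdot)$ is $R$-Lipschitz and $r\leq H(x,\theta)\leq R$ on the sphere, decompose $p-p'=\theta/H(\theta)-\theta'/H(\theta')$ via the triangle inequality into a direction term and a norm term, and bound each using these two facts. The paper phrases it as a Lipschitz estimate for the map $\theta\mapsto\theta/H(x,\theta)$ and then invokes the bijection with $\partial F^0(x)$, whereas you work directly with $u=p/|p|$, but the algebra is the same line for line.
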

\begin{proof}
 
First of all, the reader be warned that, as $x$ plays no role in this proof,  the $x-$dependence in $H$  will be omitted. 
For all $\theta,\theta'\in  S^{N-1}$, we have
$$
\Big|\frac{\theta}{H(\theta)}-\frac{\theta'}{H(\theta')} \Big |
\leq \frac{|\theta-\theta'|}{H(\theta)} +\frac{|H(\theta)-H(\theta')|}{H(\theta)H(\theta')}\,.
$$
Since $H$ is Lipschitz continuous by \eqref{control}, recalling $r\leq H(\theta),H(\theta')\leq R$ we conclude that
$$
\frac{|\theta-\theta'|}{H(\theta)} +\frac{|H(\theta)-H(\theta')|}{H(\theta)H(\theta')}
\leq C|\theta-\theta'|
$$
for some constant $C$. Therefore, 
$$
\Big|\frac{\theta}{H(\theta)}-\frac{\theta'}{H(\theta')} \Big|
\leq C|\theta-\theta'|\;.
$$
Now, observe that the map $\theta\mapsto \theta/H(\theta)$ is a bijection between the unit sphere $S^{N-1}$ and $\partial F^0(x)$. 
So, applying the above inequality to $\theta,\theta'\in S^{N-1}$ chosen such that $p=\theta/H(\theta)$ and
$p'=\theta'/H(\theta')$ we obtain the conclusion.
\end{proof}
 
\begin{Lemma}[Lower curvature estimate for $F^0$] \label{lem:LowerCurvF0}
There is a constant $R'$ such that
$F^0(x)$ satisfies the lower curvature estimate of radius $R'$ for all $x\in\R^N$, i.e.,
$$
F^0(x)\subset B\Big( \frac{p}{H(x,p)} -R'\frac{f_p(x)}{|f_p(x)|},R'\Big)\qquad \forall x,p\in \R^N, \;p\neq 0
$$
or, equivalently, 
\begin{equation}\label{eq:equivbe}
\Big\lg p'-p, \frac{f_p}{|f_p|} \Big\rg \leq -\frac{1}{2R'}|p'-p|^2 \qquad \forall p,p'\in \partial F^0
\end{equation}
\end{Lemma}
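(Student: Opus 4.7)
The plan is to prove the equivalent form \eqref{eq:equivbe}, from which the inclusion follows by the same elementary conversion between curvature bounds and tangent-ball containments shown in Remark~\ref{re:inclusions} (together with the convexity of $F^0(x)$ to extend from $\partial F^0(x)$ to the whole set). Fix $x\in\R^N$ and $p,p'\in\partial F^0(x)$; then $H(x,p)=H(x,p')=1$, so by the $1$-homogeneity of $H(x,\cdot)$ (Euler's identity), $\lg f_p,p\rg=\lg f_{p'},p'\rg=1$.

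The core of the argument is to pull back the upper curvature estimate for $F(x)$. Since $f_{p'}\in\partial F(x)$ has outer normal $p'/|p'|$, inequality \eqref{CondBouleExtEqDef} with $v=f_p$ reads
\[
\Big\lg f_p-f_{p'},\frac{p'}{|p'|}\Big\rg \;\leq\; -\frac{1}{2R}\,|f_p-f_{p'}|^2,
\]
which, after multiplying by $|p'|$ and using $\lg f_{p'},p'\rg=1$, rearranges to
\[
1-\lg f_p,p'\rg \;\geq\; \frac{|p'|}{2R}\,|f_p-f_{p'}|^2.
\]
Since $\lg f_p,p\rg=1$ as well, the left-hand side equals $-\lg p'-p,f_p\rg$, so \eqref{eq:equivbe} will follow once $|f_p-f_{p'}|$ is bounded below by a constant multiple of $|p-p'|$.

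This lower bound is exactly what the previous two lemmas combine to give: Lemma~\ref{lem:HYP2} provides $\left|\frac{p}{|p|}-\frac{p'}{|p'|}\right|\leq|f_p-f_{p'}|/r$, while Lemma~\ref{lem:p/|p|vsp}, applicable since $p,p'\in\partial F^0(x)$, provides $|p-p'|\leq C\left|\frac{p}{|p|}-\frac{p'}{|p'|}\right|$. Composing yields $|f_p-f_{p'}|\geq(r/C)|p-p'|$. Plugging back in and invoking the crude bounds $|p'|\geq 1/R$ (from \eqref{croissanceH} and $H(x,p')=1$) and $|f_p|\leq R$ (from \eqref{control}), one obtains \eqref{eq:equivbe} with a universal constant $R'$, for example of order $R^3C^2/r^2$.

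I expect the main conceptual difficulty to be the bridge between the curvature of $F(x)$ and that of $F^0(x)$: the two inequalities \eqref{CondBouleExtEqDef} and \eqref{eq:equivbe} involve dual inner products---$\lg\cdot,p'/|p'|\rg$ on the $F$ side versus $\lg\cdot,f_p/|f_p|\rg$ on the $F^0$ side---so they cannot be matched term by term without an intermediate step. The normalization $H(x,p)=H(x,p')=1$ together with Euler's identity is precisely what realigns the linear parts, after which the bi-Lipschitz correspondence between $\partial F(x)$ and $\partial F^0(x)$ encoded in Lemmas~\ref{lem:HYP2} and~\ref{lem:p/|p|vsp} transfers the quadratic remainder correctly.
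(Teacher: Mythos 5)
Your proposal is correct and follows essentially the same route as the paper: use Euler's identity/$1$-homogeneity together with the upper curvature bound \eqref{CondBouleExtEqDef} for $F(x)$ to get $\lg f_p,p'-p\rg\leq-\frac{|p'|}{2R}|f_p-f_{p'}|^2$, then convert $|f_p-f_{p'}|$ into $|p-p'|$ via Lemma~\ref{lem:HYP2} and Lemma~\ref{lem:p/|p|vsp}, finishing with the bounds $|p'|\geq 1/R$ and $|f_p|\leq R$ to obtain $R'$ of order $C^2R^3/r^2$. The only difference is cosmetic: the paper writes out the homogeneity identity $H(p')-H(p)-\lg D_pH(p),p'-p\rg=\lg D_pH(p')-D_pH(p),p'\rg$ abstractly before specializing, while you carry out the equivalent rearrangement directly.
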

\begin{proof}
 Again,  we shall drop the $x$-dependence in all the formulas below since it is of no interest for this proof. 
Recalling Remark~\ref{re:inclusions} we conclude that it suffices to 
prove inequality (\ref{eq:equivbe}) for some constant $R'$.  
Let then $p,p'\in \partial F^0$. Since $H$ is  positively homogeneous of degree 1,  we have
$$
\begin{array}{rl}
H(p')-H(p)- \lg D_pH(p), p'-p\rg \; = & 
\lg D_pH(p'),p'\rg -\lg D_pH(p),p\rg - \lg D_pH(p), p'-p\rg \\
= & \lg D_pH(p')-D_pH(p),p'\rg
\end{array}
$$
where $D_pH(p)=f_p$ and $D_pH(p')=f_{p'}$. From the lower curvature estimate on $F$ given in (\ref{CondBouleExtEqDef}) it follows that
$$
\Big\lg f_{p}-f_{p'}, \frac{p'}{|p'|} \Big\rg \leq -\frac{1}{2R}\left|f_{p'}-f_p\right|^2\;.
$$
Thus, combining the above inequality with the previous identity, and using the fact that $H(p)=H(p')=1$, 
$$
\lg f_p, p'-p\rg \;\leq \;- \frac{|p'|}{2R}\left|f_{p'}-f_p\right|^2\;.
$$
Now, apply Lemma~\ref{lem:HYP2}  to obtain 
\begin{equation}\label{eq:le37}
\lg f_p, p'-p\rg \;\leq \; -\frac{r^2|p'|}{2R} \Big|\frac{p'}{|p'|}-\frac{p}{|p|} \Big|^2\,.
\end{equation}
Finally, let $C$ be the constant given by Lemma~\ref{lem:p/|p|vsp}. Then, \eqref{eq:le36} and \eqref{eq:le37} yield 
$$
\Big\lg \frac{f_p}{|f_p|}, p'-p \Big\rg \;\leq \; -\,\frac{r^2|p'|}{2R^2}\Big|\frac{p'}{|p'|}-\frac{p}{|p|} \Big|^2\; \leq \; -\,\frac{r^2}{2C^2R^3}\left|p'-p\right|^2\,.
$$
whence the conclusion follows  with $R'= C^2R^3/r^2$.
\end{proof}

In particular, Lemma~\ref{lem:LowerCurvF0} ensures $F^0(x)$ is a strictly convex set for any $x\in \R^N$. Thus, since $H^0(x,\cdot)$ is the support function of $F^0(x)$, $D_qH^0(x,q)$ exists for any $x,q\in \R^N$ with $q\neq 0$
(see, for instance, \cite[Theorem~A.1.20]{casi04}). In fact, we shall soon prove a stronger property: the map $q\to D_qH^0(x,q)$ is locally Lipschitz continuous in $\R^N\backslash \{0\}$. Before doing this, let us collect  some technical remarks on the link between $H^0$ and $H$ and their derivatives.

\begin{Lemma}\label{lem:AnalH0} We have, for any $p,q\in \R^N\backslash \{0\}$, 
\be\label{qFDq=pF0Dp}
\left[q\in \partial F(x)\;{\rm and }\; p=D_qH^0(x,q)\right]\qquad 
\Leftrightarrow \qquad
\left[p\in \partial F^0(x)\;{\rm and }\; q=D_pH(x,p)\right]
\ee
In particular, 
\begin{equation}\label{DqH0}
D_qH^0\Big(x,\frac{f_p(x)}{|f_p(x)|} \Big)= p \qquad \forall p\in \partial F^0(x)\,.
\end{equation}
\end{Lemma}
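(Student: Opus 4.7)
The plan is to exploit the standard polar duality between the convex body $F(x)$ (whose support function is $H(x,\cdot)$) and $F^0(x)$ (whose support function is $H^0(x,\cdot)$), together with the positive $1$-homogeneity of both Hamiltonians and Euler's identity $\langle D_pH(x,p),p\rangle=H(x,p)$. Throughout I fix $x$ and suppress it from the notation. Note also that $H^0(x,\cdot)$ is positively $1$-homogeneous (immediate from its definition), so $D_qH^0(x,\cdot)$ is positively $0$-homogeneous; hence \eqref{DqH0} will follow from \eqref{qFDq=pF0Dp} once we notice that $f_p(x)/|f_p(x)|$ and $f_p(x)$ give the same value of $D_qH^0$.

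\textbf{Forward implication.} Assume $p\in\partial F^0(x)$ (i.e.\ $H(x,p)=1$) and $q=D_pH(x,p)=f_p(x)$. Then $q\in\partial F(x)$ by the very definition of $f_p(x)$, and Euler's identity gives $\langle q,p\rangle=H(x,p)=1$. On the other hand, \eqref{H0leq1} yields $H^0(x,q)\le 1$ since $q\in F(x)$, while the equality $\langle p,q\rangle=1$ with $H(x,p)\le 1$ shows that $p$ is admissible in the maximization defining $H^0(x,q)$ and attains value $1$. Hence $H^0(x,q)=1$ and $p$ is an argmax. Uniqueness of this argmax follows from the strict convexity of $F^0(x)$ established in Lemma~\ref{lem:LowerCurvF0} (which implies that the support function $H^0(x,\cdot)$ is differentiable off the origin, see \cite[Theorem A.1.20]{casi04}), so $p=D_qH^0(x,q)$.

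\textbf{Reverse implication.} Assume $q\in\partial F(x)$ and $p=D_qH^0(x,q)$. By \eqref{H0leq1} and the fact that $q$ is a boundary point, $H^0(x,q)=1$, and by Euler's identity applied to $H^0$ we get $\langle p,q\rangle=H^0(x,q)=1$. Also $p\in F^0(x)$, i.e.\ $H(x,p)\le 1$. If we had $H(x,p)<1$, the point $\tilde p:=p/H(x,p)$ would satisfy $H(x,\tilde p)=1$ and $\langle \tilde p,q\rangle=1/H(x,p)>1=H^0(x,q)$, contradicting the definition of $H^0$. So $H(x,p)=1$, i.e.\ $p\in\partial F^0(x)$. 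Finally, $\langle q,p\rangle=1=H(x,p)=\max_{v\in F(x)}\langle v,p\rangle$ shows that $q$ attains the max; since $H(x,\cdot)$ is differentiable at $p\ne 0$ by (SA), the argmax is the singleton $\{f_p(x)\}=\{D_pH(x,p)\}$, whence $q=D_pH(x,p)$.

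\textbf{Consequence \eqref{DqH0}.} For $p\in\partial F^0(x)$, the forward direction yields $D_qH^0(x,f_p(x))=p$. Since $H^0(x,\cdot)$ is positively $1$-homogeneous, $D_qH^0(x,\cdot)$ is positively $0$-homogeneous on $\R^N\setminus\{0\}$, so $D_qH^0(x,f_p(x)/|f_p(x)|)=D_qH^0(x,f_p(x))=p$.

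\textbf{Main obstacle.} The only nontrivial point is justifying that $p$ (resp.\ $q$) is the \emph{unique} maximizer, so that it genuinely coincides with the gradient $D_qH^0$ (resp.\ $D_pH$). For $D_pH$ this is given by the standing assumption that $H(x,\cdot)$ is $C^1$ off the origin; for $D_qH^0$ it rests on the strict convexity of $F^0(x)$, which is exactly what Lemma~\ref{lem:LowerCurvF0} has just provided. No further estimates are required.
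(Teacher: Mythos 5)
Your proof is correct, but it follows a genuinely different route from the paper. The paper proves only one implication (say, $[q\in\partial F(x),\,p=D_qH^0(x,q)]\Rightarrow[p\in\partial F^0(x),\,q=D_pH(x,p)]$) and obtains the converse automatically from the biduality identity $H^{00}=H$; for the one implication it uses the inequality $H^0(x,q')H(x,p')\geq\langle p',q'\rangle$, observes it is saturated at $(p,q)$, and differentiates in $p$ at the point of equality to produce $D_pH(x,p)=q$. You instead prove both implications directly, by a uniform and rather more elementary scheme: Euler's identity $\langle D_pH,p\rangle=H$ to produce the pairing value $1$, the maximization characterization of the support function to identify the relevant vector as an argmax, and then uniqueness of the argmax (via the $C^1$ hypothesis on $H(x,\cdot)$ in one direction, via the strict convexity of $F^0(x)$ from Lemma~\ref{lem:LowerCurvF0} plus Theorem~A.1.20 of \cite{casi04} in the other) to conclude that the argmax is indeed the gradient. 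Your argument spells out explicitly why $H(x,p)=1$ and why $H^0(x,q)=1$ (points the paper passes over quickly), at the cost of not exploiting the symmetry $H^{00}=H$ that lets the paper do half the work. The handling of \eqref{DqH0} via $0$-homogeneity of $D_qH^0(x,\cdot)$ is identical in both.
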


\begin{proof} We just need to show the implication 
$$
\left[q\in \partial F(x)\;{\rm and }\; p=D_qH^0(x,q)\right]\qquad \Rightarrow \qquad
\left[p\in \partial F^0(x)\;{\rm and }\; q=D_pH(x,p)\right]
$$
because $H^{00}=H$. Let $q\in \partial F(x)$ and $p=D_qH^0(x,q)$. Note that $H(x,p)=H^0(x,q)=1$ and, in particular, $p\in \partial F^0(x)$. By definition, we have
\be\label{H0H}
H^0(x,q')H(x,p')\geq \lg p', q'\rg \qquad \forall p', q'\in \R^N\,.
\ee
This inequality becomes an equality for $(p',q')=(p,q)$ because 
$$
\lg p, q\rg= \lg D_qH^0(x,q), q\rg = H^0(x,q)=1= H^0(x,q)H(x,p)\;.
$$
Taking the derivative in (\ref{H0H}) with respect to $p$ then gives 
$$
H^0(x,q)D_pH(x,p)=D_pH(x,p)= q\;.
$$

Next we turn to the proof of (\ref{DqH0}). Recall first that $f_p(x)= D_pH(x,p)$ for any $p\neq 0$. So, if $p\in \partial F^0(x)$, then (\ref{qFDq=pF0Dp}) implies that 
$$
D_qH^0\Big(x,\frac{f_p(x)}{|f_p(x)|} \Big)= D_qH^0\Big(x,f_p(x) \Big)= p\;,
$$
since $D_qH^0(x,\cdot)$ is $0-$homogeneous. 
 \end{proof}

\begin{Lemma}\label{lem:lipDqH0} There is a constant $C$ such that, for every $x\in\R^N$,
$$
\left|D_qH^0(x,q)-D_qH^0(x,q')\right|\leq \frac{C}{|q|\vee |q'|} |q-q'|\qquad\forall q,q'\in \R^N\setminus\{0\}\,.
$$
\end{Lemma}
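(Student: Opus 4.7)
The plan is to reduce everything to the lower curvature estimate for $F^0$ established in Lemma~\ref{lem:LowerCurvF0}, using the bijection between unit normal directions and boundary points of $F^0(x)$ encoded in Lemma~\ref{lem:AnalH0}.

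First I would exploit that $H^0(x,\cdot)$ is positively $1$-homogeneous, so $D_qH^0(x,\cdot)$ is positively $0$-homogeneous, and therefore
\[
D_qH^0(x,q)=D_qH^0\Big(x,\tfrac{q}{|q|}\Big)\,,\qquad D_qH^0(x,q')=D_qH^0\Big(x,\tfrac{q'}{|q'|}\Big)\,.
\]
Set $p:=D_qH^0(x,q)$ and $p':=D_qH^0(x,q')$; both lie in $\partial F^0(x)$, and by Lemma~\ref{lem:AnalH0} (combined with the $0$-homogeneity of $D_qH^0(x,\cdot)$) they satisfy $f_p(x)/|f_p(x)|=q/|q|$ and $f_{p'}(x)/|f_{p'}(x)|=q'/|q'|$.

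Next I would apply Lemma~\ref{lem:LowerCurvF0} in the form \eqref{eq:equivbe} twice, once for the pair $(p,p')$ and once for the pair $(p',p)$:
\[
\Big\lg p-p',\,\tfrac{f_p}{|f_p|}\Big\rg \geq \tfrac{1}{2R'}|p-p'|^2\,,\qquad
\Big\lg p-p',\,\tfrac{f_{p'}}{|f_{p'}|}\Big\rg \leq -\tfrac{1}{2R'}|p-p'|^2\,.
\]
Subtracting and using Cauchy--Schwarz yields the one-sided Lipschitz bound
\[
|p-p'|\;\leq\; R'\,\Big|\tfrac{f_p}{|f_p|}-\tfrac{f_{p'}}{|f_{p'}|}\Big|\;=\;R'\,\Big|\tfrac{q}{|q|}-\tfrac{q'}{|q'|}\Big|\,.
\]

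Finally I would record the elementary estimate
\[
\Big|\tfrac{q}{|q|}-\tfrac{q'}{|q'|}\Big|
=\Big|\tfrac{q-q'}{|q|}+\tfrac{q'\,(|q'|-|q|)}{|q|\,|q'|}\Big|
\leq \tfrac{2|q-q'|}{|q|}\,,
\]
and the symmetric version with $|q'|$ in the denominator, giving the combined bound $2|q-q'|/(|q|\vee|q'|)$. Combining with the previous inequality produces the desired conclusion with constant $C=2R'$. There is no genuine obstacle here; the only conceptually nontrivial point is recognizing that the identification $f_p(x)/|f_p(x)|=q/|q|$ is exactly what converts the lower curvature estimate on $F^0(x)$ into Lipschitz regularity of $D_qH^0(x,\cdot)$ on the sphere, after which $0$-homogeneity takes care of the general case and yields the $1/(|q|\vee|q'|)$ factor.
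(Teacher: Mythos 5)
Your proof is correct and follows essentially the same route as the paper: reduce to unit vectors by the $0$-homogeneity of $D_qH^0(x,\cdot)$, use Lemma~\ref{lem:AnalH0} to translate the lower curvature bound \eqref{eq:equivbe} for $F^0(x)$ into a Lipschitz estimate for $D_qH^0(x,\cdot)$ on $S^{N-1}$, and then apply the elementary bound $\left|\frac{q}{|q|}-\frac{q'}{|q'|}\right|\le \frac{2|q-q'|}{|q|\vee|q'|}$. The only cosmetic difference is that the paper first fixes $p,p'\in\partial F^0(x)$ and then identifies them with $D_qH^0(x,q)$, $D_qH^0(x,q')$ via the bijection $p\mapsto f_p(x)/|f_p(x)|$, whereas you start from the $q$ side; both yield the same constant $2R'$.
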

\begin{proof}
Let us fix $p,p'\in \partial F^0(x)$. Owing to Lemma~\ref{lem:LowerCurvF0} in its equivalent form \eqref{eq:equivbe}, we deduce that
$$
\Big\lg p'-p, \frac{f_p(x)}{|f_p(x)|} \Big\rg \leq -\frac{1}{2R'} \left| p'-p\right|^2
$$
and
$$
\lg p-p', \frac{f_{p'}(x)}{|f_{p'}(x)|}\rg \leq -\frac{1}{2R'} \left| p'-p\right|^2\;.
$$
Adding up the last two inequalities,
\begin{equation}\label{p-p'}
\left| p'-p\right|^2 \leq R'\Big\lg p'-p, \frac{f_{p'}(x)}{|f_{p'}(x)|}-\frac{f_{p}(x)}{|f_{p}(x)|} \Big\rg \leq R'
|p'-p| \Big |\frac{f_{p'}(x)}{|f_{p'}(x)|}-\frac{f_{p}(x)}{|f_{p}(x)|} \Big |\,.
\end{equation}
Now, recall that the map $p\mapsto f_p(x)/|f_p(x)|$ is a bijection from $\partial F^0(x)$ to $S^{N-1}$ to deduce that for all  $q,q'\in S^{N-1}$ there are $p,p'\in \partial F^0(x)$ such that $q= f_p(x)/|f_p(x)|$ and $q'=f_{p'}(x)/|f_{p'}(x)|$. Then, combining (\ref{DqH0}) and (\ref{p-p'}), 
$$
\begin{array}{rl}
\left|D_qH^0\left(x,q'\right)-D_qH^0\left(x,q\right)\right|\; = & 
\Big|D_qH^0 \Big(x,\frac{f_{p'}(x)}{|f_{p'}(x)|} \Big)-D_qH^0 \Big(x,\frac{f_p(x)}{|f_p(x)|} \Big) \Big |  \\
 = &  |p'-p|\vspace{0.2cm} \\
 \leq & R'\left|\frac{f_{p'}(x)}{|f_{p'}(x)|}-\frac{f_{p}(x)}{|f_{p}(x)|}\right| =  R'|q'-q|\,.
\end{array}
$$
This is the desired estimate for 
$q,q'\in S^{N-1}$. Next, let $q,q'\in \R^N\backslash\{0\}$. Then, since  $D_qH^0(x,\cdot)$ is homogeneous of degree $0$,  
$$
\left|D_qH^0(x,q')-D_qH^0(x,q)\right|\leq R' \Big|\frac{q'}{|q'|}-\frac{q}{|q|} \Big|\,.
$$
Finally, observe that
$$
\Big|\frac{q'}{|q'|}-\frac{q}{|q|} \Big| \leq \Big|\frac{q'}{|q'|}-\frac{q}{|q'|} \Big|+ \Big|\frac{q}{|q'|}-\frac{q}{|q|} \Big|
= \frac{|q'-q| }{|q'|}+ \frac{|q|}{|q||q'|} |\ |q'|-|q|\ | \leq 2 \frac{|q'-q| }{|q'|}
$$
to complete the proof. \end{proof}

\section{Regularity of extremal trajectories}\label{se:extremal}
In this section, we shall prove a regularity result for the extremal trajectories of the differential inclusion 
\be\label{InclDiff}
x'(t)\in F(x(t)) \qquad t\geq 0\,,
\ee
where $F$ is the multifunction introduced in \eqref{eq:F}, and $H$ is a given function satisfying (SA). Alternatively, this analysis could be addressed to differential inclusions associated with a multifunction $F:\R^N \rightrightarrows\R^N$ that satisfies \eqref{HypHol}, \eqref{CondBoule}, and \eqref{control} as standing assumptions, in which case the Hamiltonian $H$ should be defined as in \eqref{eq:H}. 

A {\em trajectory} of the above differential inclusion is a locally absolutely continuous arc $x(\cdot):[0,\infty)\to\R^N$ that satisfies  \eqref{InclDiff} for a.e. $t\ge 0$. Given a closed subset $K$ of $\R^N$, we denote by ${\mathcal R}(t)$, $t\ge 0$, the {\em reachable set} (from $K$) in time $t$, that is,
$$
{\mathcal R}(t)=\left\{x(t)~:~x(\cdot)\; \mbox{\rm is a trajectory of (\ref{InclDiff}) with $x(0)\in K$}\right\}\,.
$$
A trajectory $\bar x(\cdot)$ of (\ref{InclDiff}) is called extremal on the time interval $[0,t]$ if $\bar x(t)\in \partial {\mathcal R}(t)$.
In this case, one can show that in fact $\bar x(s)\in {\mathcal R}(s)$ for every $s\in [0,t]$. 

Due to the special structure of $F$, described by the properties \eqref{HypHol}, \eqref{CondBoule}, and \eqref{control}, we will be able to show that all extremal trajectories are ${\mathcal C}^{1,\alpha/2}$-smooth. More precisely, we have the following result.
\begin{Theorem}\label{ReguExtremal}
Assume {\rm (SA)} and let  $\bar x$ be an extremal trajectory of  \eqref{InclDiff} on some time interval $[0,T]$.Then 
\be\label{estix'}
|\bar x'(t_2)-\bar x'(t_1)|\leq C(t_2-t_1)^{\alpha/2} \qquad \forall t_1,t_2\in [0,T]
\ee
for some constant $C$.
\end{Theorem}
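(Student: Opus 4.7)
The plan is to extract a dual (costate) arc $p(\cdot)$ via a maximum principle, establish a quantitative Hölder bound on its direction, and then combine this with Lemmas~\ref{fpxfpy} and \ref{lem:HYP2} to obtain the claimed $(\alpha/2)$-Hölder regularity of $\bar x'$.

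Since $\bar x$ is extremal, $\bar x(t)\in\partial \mathcal R(t)$ for every $t\in[0,T]$, so a version of the Pontryagin maximum principle for differential inclusions yields a nonvanishing absolutely continuous dual arc $p:[0,T]\to\R^N\setminus\{0\}$ satisfying the maximum condition
$$
\lg \bar x'(t),p(t)\rg \;=\;\max_{v\in F(\bar x(t))}\lg v,p(t)\rg \;=\; H(\bar x(t),p(t)) \qquad \text{a.e.}\ t\in[0,T].
$$
The exterior curvature bound \eqref{CondBouleExtEqDef} forces this maximum to be attained uniquely at $f_{p(t)}(\bar x(t))$, so $\bar x'(t)=f_{p(t)}(\bar x(t))$ almost everywhere.

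The heart of the matter is then to prove a quantitative estimate of the form
$$
\Big|\frac{p(t_2)}{|p(t_2)|}-\frac{p(t_1)}{|p(t_1)|}\Big| \;\leq\; C\,(t_2-t_1)^{\alpha/2}, \qquad 0\le t_1<t_2\le T,
$$
for the direction of $p$. This should follow from a needle-variation argument: fix $s\in[0,T)$ and a small $h>0$, replace $\bar x'$ by an alternative velocity $v\in F(\bar x(s))$ on a short window $[s,s+h]$, and extend the perturbation as an admissible trajectory $y$ of \eqref{InclDiff} on $[s+h,T]$. Extremality of $\bar x$ at time $T$ forces $\lg p(T),y(T)-\bar x(T)\rg\le 0$. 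Two competing effects govern this inequality: the $2\alpha$-Hölder continuity of $F$ in \eqref{HypHol} contributes an error of order $h^{1+2\alpha}$ coming from evaluating $F$ at $y$ rather than at $\bar x$, while the exterior curvature bound \eqref{CondBouleExtEqDef} produces a quadratic cost of order $h|v-\bar x'(s)|^2/(2R)$ from using $v$ in place of the optimal velocity $\bar x'(s)=f_{p(s)}(\bar x(s))$. Aligning $v-\bar x'(s)$ with the component of $p(T)-p(s)$ transverse to $p(s)$ and balancing the two effects should produce the exponent $\alpha/2$, echoing the halving already present in Lemma~\ref{fpxfpy}, where the $2\alpha$-regularity of $F$ was turned into the $\alpha$-regularity of $f_p$.

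Given this direction estimate, the proof concludes through the decomposition
$$
\bar x'(t_2)-\bar x'(t_1)=\bigl[f_{p(t_2)}(\bar x(t_2))-f_{p(t_2)}(\bar x(t_1))\bigr]+\bigl[f_{p(t_2)}(\bar x(t_1))-f_{p(t_1)}(\bar x(t_1))\bigr].
$$
The first bracket is $O((t_2-t_1)^\alpha)$ by Lemma~\ref{fpxfpy} together with the Lipschitz bound $|\bar x'|\le R$ from \eqref{control}; the second is $O((t_2-t_1)^{\alpha/2})$ by Lemma~\ref{lem:HYP2} applied to the direction estimate. Since $\alpha\ge\alpha/2$, both are dominated by $(t_2-t_1)^{\alpha/2}$ on any bounded time interval, yielding \eqref{estix'}. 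The main obstacle is clearly the needle-variation step: the variational bookkeeping must be arranged so that the $O(h^{1+2\alpha})$ Hölder error and the $O(h|v-\bar x'(s)|^2)$ curvature cost balance in exactly the right way to produce the exponent $\alpha/2$, rather than a weaker rate. Everything else is a clean combination of the preliminary lemmas of Section~\ref{se:preliminaries}.
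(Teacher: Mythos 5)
Your approach is fundamentally different from the paper's, and it has a gap that is not merely a missing computation but a structural obstacle.

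You propose to extract an absolutely continuous costate arc $p(\cdot)$ via a Pontryagin maximum principle for differential inclusions and then prove a H\"older estimate on its direction by needle variations. But the standard maximum principle for differential inclusions requires $F$ to be Lipschitz in $x$: the adjoint inclusion takes the form $-p'(t)\in\partial_x H(\bar x(t),p(t))$ (Clarke subdifferential or coderivative of $F$), and these objects are bounded only when $H(\cdot,p)$ is Lipschitz. Here $H(\cdot,p)$ is merely $2\alpha$-H\"older with $\alpha<1/2$, so $\partial_x H$ is not defined, and there is no reason an absolutely continuous costate with the maximum property exists. Likewise, the needle-variation step requires the perturbation made on $[s,s+h]$ to be propagated forward to time $T$ along a comparison trajectory, which in the Lipschitz case is done by linearizing the dynamics; with H\"older $F$ there is no linearization, solutions do not depend Lipschitz-continuously on initial data, and the claimed $O(h^{1+2\alpha})$ error at time $T$ has no justification. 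You flag the needle-variation balancing as ``the main obstacle,'' but the obstacle is upstream: the whole costate framework is not available here.

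The paper sidesteps costates entirely. It sets $\bar p(t)=D_qH^0(\bar x(t),\bar x'(t))$ pointwise (no adjoint equation, no regularity of $\bar p$ claimed) and compares the extremal arc against explicit competing chord trajectories: the straight segment from $\bar x(t_1)$ to $\bar x(t_2)$, reparametrized to be admissible. Extremality forces the chord to be no faster, giving $t_2-t_1\le H^0(\bar x(t_2),\bar x(t_2)-\bar x(t_1))+C(t_2-t_1)^{1+\alpha}$. A midpoint time $\bar t$ is then chosen so that $H^0(\bar x(t_2),\bar x(\bar t)-\bar x(t_1))=H^0(\bar x(t_2),\bar x(t_2)-\bar x(\bar t))$; running the chord comparison on $[t_1,\bar t]$ and $[\bar t,t_2]$ shows $H^0$ is nearly additive on the two displacements $a$ and $b$, and the strict convexity (lower curvature bound on $F^0$, Lemma~\ref{lem:LowerCurvF0}) then forces $a$ and $b$ to be close, hence $\bar x$ satisfies the midpoint estimate $\bigl|\bar x\bigl(\tfrac{t_1+t_2}{2}\bigr)-\tfrac{\bar x(t_1)+\bar x(t_2)}{2}\bigr|\le C(t_2-t_1)^{1+\alpha/2}$. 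This is simultaneous semiconcavity and semiconvexity with modulus $\rho^{\alpha/2}$, and \cite[Theorem~3.3.7]{casi04} converts that into $\mathcal C^{1,\alpha/2}$ regularity. So the paper's proof uses extremality globally, through admissible chord competitors and duality between $H$ and $H^0$, rather than locally through an adjoint arc. Your final decomposition of $\bar x'(t_2)-\bar x'(t_1)$ and the use of Lemmas~\ref{fpxfpy} and \ref{lem:HYP2} would be fine if you had the direction estimate, but the route you propose to get it does not go through in the H\"older setting.
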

\begin{proof}
 Let $\bar x$ be an extremal trajectory on $[0,T]$. Then, by extremality, $x'(t)\in \partial F(\bar x(t))$ for almost all $t\in [0,T]$, so that we can set 
 $$
 \bar p(t)= D_qH^0(\bar x(t), \bar x'(t)) \qquad \mbox{\rm a.e. in }[0,T]\;.
 $$
Using Lemma \ref{lem:AnalH0}, we obtain the following relation between $\bar x$ and $\bar p$:
$$
\bar x'(t)= D_pH\big(\bar x(t),\bar p(t)\big) \quad \mbox{ for a.e.}\quad t\in [0,T]\,.
$$
{\bf Step 1.} We first claim that, for any $0\leq t_1<t_2\leq T$ we have 
\be\label{klaim1} 
t_2-t_1 \; \leq \;  H^0\Big(\bar x(t_2), \int_{t_1}^{t_2} D_pH\big(\bar x(t_2),\bar p(t)\big)dt\Big) +C(t_2-t_1)^{1+\alpha} \,.
\ee
{\it Proof of (\ref{klaim1}): } Let us set
$$
q= \frac{\bar x(t_2)-\bar x(t_1)}{|\bar x(t_2)-\bar x(t_1)|}\;.
$$
Let $\lambda:[t_1,t_2]\to\R$ be a solution of the Cauchy problem
$$
\begin{cases}
\ds\lambda'(t)=\frac{1}{H^0(\bar x(t_1)+\lambda(t)q,q)}\,,
& t\in [t_1,t_2]
\\
\lambda(t_1)=0\,.
\end{cases}
$$
Then $x(t):=\bar x(t_1)+\lambda(t) q$ is a trajectory of (\ref{InclDiff}) since,  owing to (\ref{H0leq1}), 
$$
H^0\big(x(t), x'(t) \big)= H^0 \big(x(t), \lambda'(t)q \big) = \frac{H^0 \big(\bar x(t_1)+\lambda(t) q, q \big)}{ H^0 \big(\bar x(t_1)+\lambda(t)q,q \big)}=1
$$
for all $t\in [t_1,t_2]$. Therefore, since $\bar x$ is an extremal trajectory, the point $\bar x(t_1)+\lambda(t_2)q$
belongs to the segment $[\bar x(t_1), \bar x(t_2)]$. So,
$$
\lambda(t_2)-\lambda(t_1) \leq |\bar x(t_2)-\bar x(t_1)|\;.
$$
Note that, owing to (\ref{ReguH0}), the above inequality, and the boundedness of $F$,
$$
\Big|\frac{1}{H^0(\bar x(t_1)+\lambda(t)q,q)}-\frac{1}{H^0(\bar x(t_2),q)}\Big|\le C|\bar x(t_2)-\bar x(t_1)|^{2\alpha}
\le C(t_2-t_1)^{2\alpha}
$$
for all $t\in [t_1,t_2]$ and some constant $C$. Hence, 
$$
\lambda(t_2)-\lambda(t_1) = \int_{t_1}^{t_2} \frac{dt}{H^0(\bar x(t_1)+\lambda(t)q,q)} \geq \frac{t_2-t_1}{H^0(\bar x(t_2),q)}
-C(t_2-t_1)^{1+2\alpha}\,.
$$
So, appealing to Lemma \ref{fpxfpy},
$$
\begin{array}{rl}
t_2-t_1 \; \leq & H^0(\bar x(t_2),q)|\bar x(t_2)-\bar x(t_1)|+C(t_2-t_1)^{1+2\alpha} 
\vspace{.2cm}\\
= & H^0(\bar x(t_2),\bar x(t_2)-\bar x(t_1))+C(t_2-t_1)^{1+2\alpha} \vspace{.2cm}\\
= & H^0\Big(\bar x(t_2), \int_{t_1}^{t_2} D_pH(\bar x(t),\bar p(t))dt\Big) +C(t_2-t_1)^{1+2\alpha} \vspace{.2cm}\\
\leq & H^0 \Big(\bar x(t_2), \int_{t_1}^{t_2} D_pH(\bar x(t_2),\bar p(t))dt \Big) +C(t_2-t_1)^{1+\alpha} \,,
\end{array}
$$
where the above constants may change from line to line. We have thus proved (\ref{klaim1}). \\

\noindent
{\bf Step 2.} Let us fix $0\leq t_1<t_2\leq T$ and let $\bar t$ be such that 
\be\label{ChoixBarT}
H^0(\bar x(t_2), \bar x(\bar t)-\bar x(t_1))= H^0(\bar x(t_2), \bar x(t_2)-\bar x(\bar t))\;.
\ee
Define
\begin{equation}\label{eq:defab}
a= \bar x(\bar t)-\bar x(t_1) \quad {\rm and }\quad b= \bar x(t_2)-\bar x(\bar t)\,.
\end{equation}
We claim that
\be\label{klaim2}
H^0(\bar x(t_2),a)+H^0(\bar x(t_2),b) \leq H^0(\bar x(t_2),a+b)+ C(t_2-t_1)^{1+\alpha}
\ee
and 
\be\label{klaim3}
|2\bar t-t_1-t_2|\leq C(t_2-t_1)^{1+\alpha}\,.
\ee

\noindent {\it Proof of (\ref{klaim2}) and (\ref{klaim3}): } Again by Lemma \ref{fpxfpy}, and then using Jensen's inequality, we obtain
\begin{equation}\label{eq:stima1}
\begin{array}{rl}
H^0(\bar x(t_2), a) \; = & H^0\Big(\bar x(t_2), \int_{t_1}^{\bar t} D_pH (\bar x(s), \bar p(s))ds \Big) \vspace{.2cm}
\\
\leq & H^0 \Big(\bar x(t_2), \int_{t_1}^{\bar t} D_pH (\bar x(t_2), \bar p(s))ds \Big) + C(t_2-t_1)^{1+\alpha}
\vspace{.2cm}\\
\leq & \int_{t_1}^{\bar t}  H^0 \Big(\bar x(t_2), D_pH (\bar x(t_2), \bar p(s)) \Big)ds + C(t_2-t_1)^{1+\alpha}
\vspace{.2cm}\\
\leq & \bar t-t_1+ C(t_2-t_1)^{1+\alpha}\,.
\end{array}
\end{equation}
Applying (\ref{klaim1}) between $t_1$ and $\bar t$ gives 
\begin{equation}\label{eq:prima}
\bar t-t_1 \; \leq   H^0 \Big(\bar x(\bar t), \int_{t_1}^{\bar t} D_pH(\bar x(\bar t),\bar p(t))dt \Big) +C(\bar t-t_1)^{1+\alpha} \,.
\end{equation}
Now, in order to bound the above right-hand side observe that
\begin{equation*}
\big|D_pH(\bar x(\bar t),\bar p(t))-D_pH(\bar x(t),\bar p(t)) \big|
\le C|\bar x(\bar t)-\bar x(t)|^{\alpha}\le C(\bar t-t_1)^{\alpha} 
\end{equation*}
in view of Lemma~\ref{fpxfpy}, and
\begin{equation*}
\big|H^0 \big(\bar x(\bar t), a \big)-H^0 \big(\bar x(t_2), a \big)\big|
\le C\,|a|\,|\bar x(\bar t)-\bar x(t_2)|^{2\alpha}\le C (\bar t_2-t_1)^{1+2\alpha} 
\end{equation*}
owing to Lemma~\ref{le:H0}. Therefore, \eqref{eq:prima} leads to
\begin{equation}\label{eq:stima2}
\begin{array}{rl}
\bar t-t_1 \; \leq &  H^0 \Big(\bar x(\bar t), \underbrace{\int_{t_1}^{\bar t} D_pH(\bar x(t),\bar p(t))dt}_{a} \Big) +C(\bar t-t_1)^{1+\alpha} 
\vspace{.2cm}\\
\leq &  H^0(\bar x(t_2), a) +C(t_2-t_1)^{1+\alpha}\,.
\end{array}
\end{equation}
On account of \eqref{eq:stima1} and \eqref{eq:stima2}, we have
$$
H^0(\bar x(t_2), a) - C(t_2-t_1)^{1+\alpha} \leq \bar t-t_1 \leq H^0(\bar x(t_2), a) + C(t_2-t_1)^{1+\alpha}\;.
$$
In the same way,
$$
H^0(\bar x(t_2), b ) - C(t_2-t_1)^{1+\alpha} \leq t_2-\bar t \leq H^0(\bar x(t_2), b) + C(t_2-t_1)^{1+\alpha}\;.
$$
Combining the above two inequalities with the choice of $\bar t$ made in (\ref{ChoixBarT}) gives (\ref{klaim3}). 
Moreover, adding up the above inequalities and recalling (\ref{klaim1}), we get
$$
H^0(\bar x(t_2), a)+H^0(\bar x(t_2), b ) \leq (t_2-t_1)+ C(t_2-t_1)^{1+\alpha}
 \leq H^0(\bar x(t_2), a+b)+ C(t_2-t_1)^{1+\alpha}\,,
 $$
 which yields (\ref{klaim2}). \\
 
\noindent
{\bf Step 3.} We now claim that, for any $0\leq t_1<t_2\leq T$, we have
\be\label{klaim4}
\Big|\bar x\Big(\frac{t_1+t_2}{2}\Big)-\frac{\bar x(t_2)+\bar x(t_1)}{2}\Big|\leq C(t_2-t_1)^{1+\alpha}\,.
\ee
{\it Proof of (\ref{klaim4}): } Having fixed $0\leq t_1<t_2\leq T$, we will use the same notation for $\bar t$, $a$, and $b$ as in  (\ref{ChoixBarT}) and \eqref{eq:defab}.
Moreover, since $x(t_2)$ is fixed in the reasoning below, as we often did before we will omit the $x(t_2)$-dependance of $H^0$
and all the other maps appearing in this proof.

Let us set, for any $q\in \R^N\backslash \{0\}$, $g_q=D_qH^0(q)$. We use below repetitively the following remark:  
$$
\mbox{\rm for any $q\neq 0$, if $p=g_q$, then $f_p=q/H^0(q)$.}
$$
Indeed, since $q/H^0(q)\in \partial F$ and $p=D_qH^0(q/H^0(q))$ (because $D_qH^0$ is $0-$homogeneous), Lemma \ref{lem:AnalH0} implies that $q/H^0(q)= D_pH(p)=f_p$. 

We first show that 
\be\label{eq:gqgqprim}
\frac{1}{2R'} |g_{q}-g_{q'}|^2\leq \lg g_{q}-g_{q'}, \frac{q}{|q|}\rg \qquad \forall q,q'\in \R^N\backslash \{0\}\;,
\ee
where $R'$ is the constant appearing in Lemma \ref{lem:LowerCurvF0}. For this, let us consider the lower curvature estimate (\ref{eq:equivbe}) in Lemma \ref{lem:LowerCurvF0} with $p=g_q$ and $p'=g_{q'}$: because of the remark above and since $p,p'\in \partial F^0$, we have
$$
\Big\lg g_{q'}-g_q, \frac{q}{|q|} \Big\rg \leq -\frac{1}{2R'}|g_{q'}-g_q|^2
$$ 
which is exactly (\ref{eq:gqgqprim}). 

Next we note that 
\be\label{eq:a-b}
|a-b| \leq CH^0(a) |g_a-g_b|
\ee
Indeed let us apply the first inequality in Lemma \ref{lem:HYP2} to $p=g_a$ and $q=g_b$. Since $f_p=a/H^0(a)$ and $f_q=b/H^0(b)$ and since $H^0(a)=H^0(b)$ by (\ref{ChoixBarT}), we have
$$
|a-b| \leq R H^0(a) \left| \frac{g_a}{|g_a|}-\frac{g_b}{|g_b|}\right|\leq C H^0(a) |g_a-g_b|\;.
$$
In order to estimate the right-hand side of inequality (\ref{eq:a-b}), let us observe that, in view of (\ref{klaim2}), 
$$
\begin{array}{rl}
0\;  \leq & H^0(a+b)-H^0(a)-H^0(b)+C(t_2-t_1)^{1+\alpha} \\
= & \lg g_{a+b},a+b\rg-\lg g_a,a\rg-\lg g_b,b\rg +C(t_2-t_1)^{1+\alpha}
\end{array}
$$
so that
$$
0\leq \lg g_{a+b}-g_a,a\rg+\lg g_{a+b}-g_b,b\rg +C(t_2-t_1)^{1+\alpha}\;.
$$
Plugging inequality  (\ref{eq:gqgqprim}) into this inequality leads to
$$
|a|\left|g_{a+b}-g_a\right|^2 +|b|\left|g_{a+b}-g_b\right|^2 \leq C(t_2-t_1)^{1+\alpha}\;,
$$
i.e., since $|a|\geq (\bar t-t_1)/C$ and $|b|\geq (t_2-\bar t)/C$ and (\ref{klaim3}) holds, 
$$
\left|g_{a+b}-g_a\right| \leq C (t_2-t_1)^{\alpha/2}\;, \; 
 \left|g_{a+b}-g_b\right| \leq C (t_2-t_1)^{\alpha/2}\;.
$$
So, recalling that $H^0(a) \leq C(\bar t-t_1)\leq C (t_2-t_1)$, we get from (\ref{eq:a-b}),
$$
|a-b| \; \leq \; CH^0(a)(|g_{a+b}-g_a(x)|+|g_{a+b}-g_b(x)|)\  \leq\ C(t_2-t_1)^{1+\alpha/2}\;.
$$
From the definition of $a$ and $b$, this means that 
$$
\left|2\bar x(\bar t)-\bar x(t_1)-\bar x(t_2)\right|\leq C(t_2-t_1)^{1+\alpha/2}\;.
$$
Using again (\ref{klaim3}) and the Lipschitz continuity of $\bar x$ then easily yields to (\ref{klaim4}). \\

\noindent
{\bf Conclusion.} In view of (\ref{klaim4}), Theorem 2.1.10 of \cite{casi04} states that each component of $\bar x$ is semi-convex and 
semi-concave with a modulus $m$ of the form $m(\rho)=C\rho^{\alpha/2}$. Then, from Theorem 3.3.7 of \cite{casi04},
$\bar x$ is of class ${\mathcal C}^{1,\alpha/2}$ and (\ref{estix'}) holds.
\end{proof}

\section{The semiconcavity result}\label{se:semiconcavity}
Let $H:\R^N\times \R^N\to \R$ be a continuous function satisfying our Standing Assumptions with constants $\alpha,r,R,$ and $C_0$, and let  $\Omega\subset\R^N$ be an open set. 

In this section, we will apply the previous analysis to study the regularity of the solution to the Dirichlet problem
\be\label{HJ}
\left\{\begin{array}{ll}
H(x,-Du(x))=1 & {\rm in }\; \Omega\\
u(x)=0 & {\rm on}\; \partial \Omega
\end{array}\right.
\ee
The existence, uniqueness, and Lipschitz continuity of the viscosity solution $u$ of the above problem is well-known, as well as the representation formula
\begin{equation}\label{eq:mintime}
u(x)=\inf\big\{t\geq 0~:~\exists \; x(\cdot) \;\mbox{\rm trajectory of (\ref{InclDiff}) with $x(0)=x\,,\;x(t)\in \partial \Omega$}\big\}
\end{equation}
(see, e.g., \cite{bardi1997optimal}).

We recall that a function $v:\Omega\to\R$ is locally $\theta$-semiconcave, with $\theta\in (0,1]$, if for every compact convex set ${\mathcal O}\subset \Omega$ there is a constant $C_{\mathcal O}$ such that 
$$
v(\lambda x+(1-\lambda)y) \geq \lambda v(x)+(1-\lambda)v(y) - C_{\mathcal O}\lambda(1-\lambda) |x-y|^{1+ \theta}$$
for all $x,y\in {\mathcal O}$ and $\lambda\in[0,1]$. We are now ready for our main result.
\begin{Theorem}\label{theo:main}
Assume {\rm (SH)}. Then the solution $u$ of (\ref{HJ}) is locally $\theta $-semiconcave in $\Omega$ for every $\theta\in (0,\frac{\alpha}{4+\alpha})$. \end{Theorem}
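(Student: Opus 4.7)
I use the optimal-control representation (\ref{eq:mintime}) for $u$ combined with the $\mathcal{C}^{1,\alpha/2}$-regularity of extremal trajectories provided by Theorem~\ref{ReguExtremal}. Fix a compact convex $\mathcal{O}\subset\Omega$, pick $x,y\in\mathcal{O}$ and $\lambda\in[0,1]$, and set $z=\lambda x+(1-\lambda)y$. The inequality to be proved reads
$$
\lambda u(x)+(1-\lambda)u(y)-u(z)\;\le\;C_{\mathcal{O}}\lambda(1-\lambda)|x-y|^{1+\theta},
$$
so the task is to construct admissible competitor trajectories from $x$ and $y$ whose weighted terminal times exceed $u(z)$ only by this amount. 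Let $\zeta:[0,T_z]\to\overline{\Omega}$ be optimal for $u(z)$, with $\zeta(T_z)\in\partial\Omega$. The time-reversal of $\zeta$ is extremal for $x'\in -F(x)$, which still satisfies (SA), so Theorem~\ref{ReguExtremal} yields $\zeta\in\mathcal{C}^{1,\alpha/2}$ with uniform constants; via Lemma~\ref{lem:lipDqH0}, the dual variable $\bar p(t)=D_qH^0(\zeta(t),\zeta'(t))$ inherits the same $\alpha/2$-H\"older modulus.

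\textbf{Naive competitor and its defect.} A natural first attempt is the translated curve $\xi_0(t)=\zeta(t)+(x-z)$. By the $2\alpha$-H\"older bound (\ref{HypHol}) on $F$, the velocity $\zeta'(t)\in F(\zeta(t))$ lies in $F(\xi_0(t))+C|x-z|^{2\alpha}B$, so a Filippov-type selection produces an admissible trajectory $\tilde\xi$ starting at $x$ with $|\tilde\xi(t)-\xi_0(t)|\le CT_z|x-y|^{2\alpha}$, and analogously $\tilde\eta$ starts at $y$. The terminal points $\tilde\xi(T_z),\tilde\eta(T_z)$ miss $\partial\Omega$ by displacements of order $|x-z|$ and $|y-z|$ pointing in \emph{opposite} directions along $x-y$. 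Bridging these offsets by mere Lipschitz continuity of $u$ produces non-cancelling $O(|x-y|)$ corrections in the convex combination, which recovers only the Lipschitz regularity of $u$. To obtain semiconcavity one must instead \emph{bend} each of $\tilde\xi,\tilde\eta$ near the terminal time so they actually land on $\partial\Omega$ near $\zeta(T_z)$.

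\textbf{Bending, balance and main obstacle.} The interior ball condition on $F$ (first inclusion of (\ref{CondBoule})) permits inward velocity modifications of magnitude $O(r)$, while the $\mathcal{C}^{1,\alpha/2}$-regularity of $\bar p$ controls how much the tangent direction of $\zeta$ can drift over a bending window of length $\delta$. Combined with the $\mathcal{C}^{1,1}$-expansion of $q\mapsto D_qH^0(x,q)$ from Lemma~\ref{lem:lipDqH0} — which is precisely what is needed to make the $H^0$-time cost of a perturbed path quadratic in the perturbation — a careful accounting yields a bending cost of the form $|x-y|^2/\delta+\delta^{1+\alpha/2}$ on top of the bulk H\"older error inherited from the translation step. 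Balancing $|x-y|^2/\delta\sim\delta^{1+\alpha/2}$ gives $\delta\sim|x-y|^{4/(4+\alpha)}$ and a total excess of order $|x-y|^{(4+2\alpha)/(4+\alpha)}=|x-y|^{1+\alpha/(4+\alpha)}$, precisely saturating the claimed exponent $\theta<\alpha/(4+\alpha)$. The main obstacle is making this bending analytically rigorous: it requires a quantitative juggling between the two-sided curvature bounds on $F$ and $F^0$, the $\mathcal{C}^{1,\alpha/2}$-modulus of $\bar p$ coming from Theorem~\ref{ReguExtremal} (without which the drift of $\bar p$ over the bending window would be uncontrolled), and the $\mathcal{C}^{1,1}$-expansion of $H^0$, in order to keep every perturbed trajectory admissible while the bending cost remains truly quadratic in $\delta$ — which is exactly why the authors flag Theorem~\ref{ReguExtremal} as the essential technical ingredient.
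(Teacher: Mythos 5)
Your proposal does not follow the paper's route, and the route it does take has a fatal gap at the Filippov step.

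The paper works with the midpoint characterization $u(\bar x+h)+u(\bar x-h)-2u(\bar x)\le C|h|^{1+\theta}$. It takes the optimal trajectory $\bar x(\cdot)$ emanating from the \emph{center} point $\bar x$, sets $\bar t=|h|^\beta$, and from each of $\bar x\pm h$ runs a short \emph{straight line segment} to the point $\bar x(\bar t)$, parametrized so that $H^0(\cdot,\cdot)=1$ along the segment, and thereafter follows $\bar x(\cdot)$ exactly. The crucial point is that the competitor trajectories coincide with $\bar x(\cdot)$ beyond time $\bar t$, so no error is accumulated along the bulk of the trajectory. The whole game is then to estimate the traversal time $\tau_\pm$ of the two short segments. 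By the Lipschitz bound on $q\mapsto D_qH^0(\bar x,q)$ (Lemma~\ref{lem:lipDqH0}) one obtains a second-order Taylor inequality for $H^0$, which yields $\tau_\pm\le \bar t+\lg D_qH^0(\bar x,\bar t\bar v),q_\pm-\bar t\bar v\rg+C|h|^2/\bar t$. Adding the two, the linear terms $\pm h$ in $q_\pm$ cancel, leaving $2\lg D_qH^0(\bar x,\bar t\bar v),\bar x(\bar t)-\bar x-\bar t\bar v\rg$; this is where Theorem~\ref{ReguExtremal} enters, giving $|\bar x(\bar t)-\bar x-\bar t\bar v|\le C\bar t^{1+\alpha/2}$. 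Balancing $|h|^2/\bar t$ against $\bar t^{1+\alpha/2}$ gives exactly the exponent you identified.

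Your scheme instead translates the entire optimal trajectory from $z$ by $x-z$ (resp.\ $y-z$) and repairs admissibility by a Filippov-type selection. This cannot work when $F$ is only $2\alpha$-H\"older with $2\alpha<1$. Writing $w(t)=\tilde\xi(t)-\xi_0(t)$, the selection gives $|w'(t)|\le C\big(|x-z|+|w(t)|\big)^{2\alpha}$ with $w(0)=0$; this is a non-Lipschitz Osgood-type ODE whose solutions blow away from $0$ at a rate independent of the initial data. Concretely, the bound $\phi'\le C\phi^{2\alpha}$, $\phi(0)=|x-z|$, yields $\phi(T_z)\le\big(|x-z|^{1-2\alpha}+C(1-2\alpha)T_z\big)^{1/(1-2\alpha)}$, which stays bounded \emph{away from zero} as $|x-z|\to0$. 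Your stated Filippov error $C T_z|x-y|^{2\alpha}$ is therefore not valid beyond a time window of length $O(|x-z|^{1-2\alpha})$, and on $[0,T_z]$ the transported curve drifts by an amount that is not $o(1)$ in $|x-y|$. This ``bulk H\"older error inherited from the translation step'', which you acknowledge but do not bound, is in fact uncontrollable and dominates the bending terms you balance; the method recovers not even Lipschitz regularity. It is precisely to avoid this transport problem that the paper keeps a single optimal trajectory and only grafts short straight segments onto its \emph{initial} portion, where the relevant error is the $\mathcal{C}^{1,\alpha/2}$-deviation of $\bar x(\cdot)$ from its tangent line, controlled by Theorem~\ref{ReguExtremal}. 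Your final arithmetic ($|x-y|^2/\delta\sim\delta^{1+\alpha/2}$, exponent $1+\alpha/(4+\alpha)$) matches the paper's balance, and your identification of Theorem~\ref{ReguExtremal} and Lemma~\ref{lem:lipDqH0} as the key inputs is correct, but the geometric construction you propose does not deliver that estimate.
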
 
\begin{proof}
The strategy of the proof is the following. Fix
\begin{equation}
\label{eq:defbeta}
\beta\in \Big(\frac{2}{2+\alpha},\frac{4}{4+\alpha}\Big)\,,
\end{equation}
and observe that
\begin{equation*}
0<\theta:=
\beta\,\frac{2+\alpha}2-1< \frac{\alpha}{4+\alpha}\,.
\end{equation*}
Let ${\mathcal O}\subset\subset \Omega$ be an open convex set.  We are going to show that
\begin{equation}\label{eq:conclusion}
u(x+h)+u(x-h)-2u(x)\; \leq \; C |h|^{\beta(2+\alpha)/2}
\end{equation}
for  all $h\in \R^N$ sufficiently small (in this proof, $C$ denotes a generic constant depending only on $\alpha,r,R,C_0$, and ${\mathcal O}$).  Since $u$ is continuous, owing to  \cite[Theorem 2.1.10]{casi04} the above inequality implies that $u$ is locally $\theta$-semiconcave in $\Omega$.\\

\noindent{\bf Step 1.}
Let  $\bar x\in {\mathcal O}$ and let $\bar x(\cdot)$ be a solution of the minimization problem in \eqref{eq:mintime}---an optimal trajectory for short. Since $\bar x(\cdot)$ is
extremal on $[0,u(\bar x)]$, Theorem \ref{ReguExtremal} implies that $\bar x(\cdot)$ is of class ${\mathcal C}^{1,\alpha/2}$
and satisfies
$$
\left|\bar x'(t_2)-\bar x'(t_1)\right|\leq C|t_2-t_1|^{\alpha/2}\qquad \forall t_1,t_2\in [0, u(\bar x)]\,.
$$
Setting $\bar v=\bar x'(0)$, from the above inequality we obtain
\be\label{keyestim}
\left|\bar x(t)-\bar x- t\bar v\right|\leq C t^{1+\alpha/2} \qquad \forall t\in [0, u(\bar x)]\,.
\ee

\noindent{\bf Step 2.} Let $h\in \R^N$ be small enough, and set $\bar t=|h|^{\beta}$. We will now build a trajectory $x_+(\cdot)$  such that 
\begin{equation*}
\begin{cases}
x_+(0)=\bar x+h\,,
&
\\
x_+(t)\in[\bar x+h,\bar x(\bar t)] &\forall t\in [0,\tau_+]\,,
\\
x_+(t)=\bar x(t+\bar t- \tau_+) & \forall t\in [\tau_+, u(\bar x)+\tau_+-\bar t]\,.
\end{cases}
\end{equation*}
Notice that
$x_+(u(\bar x)+\tau_+-\bar t)=\bar x(u(\bar x))\in \R^N\backslash \Omega$, so that
\begin{equation}\label{eq:sc+}
u(\bar x+h) \leq 
u(\bar x)+\tau_+-\bar t\,.
\end{equation}
{\it Proof of Step 2.} In order to construct the line segment part, let us set $q_+=\bar x(\bar t)-(\bar x+h)$
 and observe that, in view of (\ref{keyestim}),
\begin{equation}\label{eq:estiq+}
q_+=\bar t \bar v+O(\bar t^{1+\alpha/2})-h\,.
\end{equation}
Then,
$q_+\neq 0$  since
$\bar t>>|h|$ and $|\bar v|\geq 1/r$. 
Let $\lambda(\cdot)$ be a solution of the Cauchy problem
$$
\begin{cases}
\ds\lambda'(t)=\frac{1}{H^0(\bar x+h+\lambda(t)q_+, q_+)}\,,
& t\ge 0
\\
\lambda(0)=0\,.
\end{cases}
$$
Since $\lambda(\cdot)$ is strictly increasing there is a unique time $\tau_+$ such that $\lambda (\tau_+)=1$. 
Now, set 
$$x_+(t)=\bar x+h+\lambda(t)q_+\qquad t\in[0,\tau_+]\,.
$$ 
Then $x_+(\cdot)$ is a solution of the differential inclusion (\ref{InclDiff})
on $[0,\tau_+]$ because 
$$H^0(x_+(t),x_+'(t))= 1\qquad \forall t\in [0,\tau_+]\,.$$ 
Moreover $x_+(\tau_+)= \bar x(\bar t)$. Thus, defining 
$$x_+(t)=\bar x(t+\bar t- \tau_+)\qquad\forall t\in [\tau_+, u(\bar x)+\tau_+-\bar t]$$
completes the construction of $x_+(\cdot)$.\\

\noindent{\bf Step 3.}
We will now  prove the estimate  
\begin{equation}\label{eq:stimatau}
\tau_+\leq \bar t+ \big\lg D_qH^0(\bar x,\bar t\bar v), q_+-\bar t\bar v\big\rg +
C\,\frac{|h|^2}{\bar t}\,.
\end{equation}
{\it Proof of Step 3.} To begin with, let us note that any $\xi\in [\bar t\bar v,q_+]$ satisfies $|\xi|\geq \bar t/C$. Indeed, if $\xi=\mu q_++(1-\mu)\bar t\bar v$
for some $\mu\in [0,1]$, then, by (\ref{eq:estiq+}),  $\xi= \bar t\bar v+O(\bar t^{1+\alpha/2})-h$ with
$|h|=\bar t^{1/\beta}$ and $\beta\in (0,1)$. So, for $|h|$ small enough, we have the desired claim: $|\xi|\geq \bar t/C$.
Then, by Lemma \ref{lem:lipDqH0} we conclude that the map $\xi\mapsto D_qH^0(\bar x, \xi)$ is Lipschitz on $[\bar t\bar v,q_+]$ with constant $C/\bar t$. So, for all $t\in [0,\tau_+]$,
\begin{multline*}
H^0(x_+(t), q_+) \; \leq   H^0(\bar x, q_+)+ C|x_+(t)-\bar x|^{2\alpha} |q_+| 
\\
\leq  H^0(\bar x, \bar t\bar v)+ \lg D_qH^0(\bar x,\bar t\bar v), q_+-\bar t\bar v\rg +
(C/\bar t)|q_+-\bar t\bar v|^2+
C|x_+(t)-\bar x|^{2\alpha} |q_+|\,.
\end{multline*}
Since $x_+(t)\in[\bar x+h, \bar x(\bar t)]$,  we have
$$
|x_+(t)-\bar x|\leq \max \{ |h|, |\bar x(\bar t)-\bar x|\} \leq C\bar t\;.
$$
Also, on account of (\ref{eq:estiq+}) and \eqref{eq:defbeta}, 
$$
|q_+|\leq C\bar t\qquad {\rm and }\qquad |q_+-\bar t\bar v|= |O(\bar t^{1+\alpha/2})-h|\leq C |h|\,.
$$
Noting that $H^0(\bar x, \bar v)=1$ because $\bar v\in \partial F(\bar x)$, the above inequality yield, by the homogeneity of $H^0(\bar x,\cdot)$,
$$
H^0(x_+(t), q_+) \; \leq \; \bar t+ \big\lg D_qH^0(\bar x,\bar t\bar v), q_+-\bar t\bar v\big\rg +
C\Big( \frac{|h|^2}{\bar t}+
\bar t^{1+2\alpha}\Big)\,,
$$
where $|h|^2/\bar t>\bar t^{1+2\alpha}$ because $\bar t=|h|^\beta$ with $\beta>1/(1+\alpha/2)$. 
So,
$$
H^0(x_+(t), q_+) \; \leq \; \bar t+ \big\lg D_qH^0(\bar x,\bar t\bar v), q_+-\bar t\bar v \big\rg +
C|h|^2/\bar t\,.
$$
Then 
$$
1=\int_0^{\tau_+}\lambda'(t)dt=\int_0^{\tau_+}\frac{dt}{H^0(x_+(t),q_+)}
 \geq \frac{\tau_+}{\bar t+ \big\lg D_qH^0(\bar x,\bar t\bar v), q_+-\bar t\bar v \big\rg +
C|h|^2/\bar t}\,,
$$
which in turn yields \eqref{eq:stimatau}.\\

\noindent{\bf Conclusion.}
Repeating the above reasoning with $q_-=\bar x(\bar t)-(\bar x-h)$, we can build  a solution $x_-(\cdot)$ to (\ref{InclDiff}) such that $x_-(0)=\bar x-h$, 
$x_-(t)\in[\bar x-h,\bar x(\bar t)]$ on the time interval $[0,\tau_-]$, and
 $x_-(t)=\bar x(t+\bar t- \tau_-)$ on $[\tau_-, u(\bar x)+\tau_--\bar t]$. Therefore,
\begin{equation}\label{eq:sc-}
u(\bar x-h) \leq 
u(\bar x)+\tau_--\bar t\,,
\end{equation}
where $\tau_-$ can be estimated as above: 
\begin{equation}\label{eq:stimatau-}
\tau_-\leq 
\bar t+ \big\lg D_qH^0(\bar x,\bar t\bar v), q_--\bar t\bar v \big\rg +
C\,\frac{|h|^2}{\bar t}\,.
\end{equation}
Hence, by \eqref{eq:sc+}, \eqref{eq:sc-}, \eqref{eq:stimatau}, \eqref{eq:stimatau-}, and \eqref{keyestim} we obtain 
$$
\begin{array}{rl}
u(\bar x+h)+u(\bar x-h)-2u(\bar x)\; \leq & \tau_+-\bar t+\tau_--\bar t \vspace{.2cm}\\
\leq & 2 \big\lg DH^0(\bar x,\bar t\bar v), \bar x(\bar t)-\bar x-\bar t\bar v \big\rg +
C\,\frac{|h|^2}{\bar t}
\vspace{.2cm}\\
\leq & C |h|^{\beta(2+\alpha)/2}
\end{array}
$$
since  $\beta<4/(4+\alpha)$. We have thus attained \eqref{eq:conclusion}, which completes the proof. 
\end{proof}


%
%
%
%

\end{document}